\newtheorem{theorem}{Theorem}[section]
\newtheorem{lemma}[theorem]{Lemma}
\newtheorem{remark}[theorem]{Remark}
\newtheorem{corollary}[theorem]{Corollary}
\newtheorem{assumption}[theorem]{Assumption}
\renewcommand{\epsilon}{\varepsilon}
\title{Singular Subspace Perturbation Bounds via Rectangular Random Matrix Diffusions}
 \author{Peiyao Lai \\ Worcester Polytechnic Institute \and Oren Mangoubi \\ Worcester Polytechnic Institute}
\begin{document}

\date{}
\maketitle

\bigskip

\begin{abstract}
  Given a matrix $A \in \mathbb{R}^{m\times d}$ with singular values $\sigma_1\geq \cdots \geq \sigma_d$, and a random matrix $G \in \mathbb{R}^{m\times d}$ with iid $N(0,T)$ entries for some $T>0$, we derive  new bounds on the Frobenius distance between subspaces spanned by the top-$k$ (right) singular vectors of $A$ and $A+G$.
This problem arises in numerous applications in statistics where a data matrix may be corrupted by Gaussian noise, and in the analysis of the Gaussian mechanism in differential privacy, where Gaussian noise is added to data to preserve private information.
We show that, for matrices $A$ where the gaps in the top-$k$ singular values are roughly $\Omega(\sigma_k-\sigma_{k+1})$  the expected Frobenius distance between the subspaces is $\tilde{O}(\frac{\sqrt{d}}{\sigma_k-\sigma_{k+1}} \times \sqrt{T})$, improving on previous bounds by a factor of $\frac{\sqrt{m}}{\sqrt{d}} \sqrt{k}$.
To obtain our bounds we view the perturbation to the singular vectors as a diffusion process-- the Dyson-Bessel process-- and use tools from stochastic calculus to track the evolution of the subspace spanned by the top-$k$ singular vectors.
\end{abstract}

\bigskip
\bigskip

\section{Introduction}

Given a matrix $A \in \mathbb{R}^{m\times d}$ with $d \leq m$ and singular values $\sigma_1\geq \cdots \geq \sigma_d$, one oftentimes wishes to approximate the right-singular vectors of $A$ by a lower rank matrix of some rank $k<d$ \cite{shikhaliev2019low, hubert2004robust, james2013introduction, kishore2017literature, liberty2007randomized}.
For instance, one may wish to learn the subspace spanned by the top-$k$ right-singular vectors of $A$, in which case one may seek a projection matrix which minimizes the distance to the projection matrix onto the subspace spanned by the top-$k$ right-singular vectors of $A$. %
One can also consider the related problem of recovering a matrix $\hat{M}_k$ which minimizes the Frobenius distance $\|\hat{M}_k - A^\top A\|_F$ to the covariance matrix $A^\top A$ of the data $A$.
Roughly speaking, both of these problems are instances of the following general problem: given a set of numbers $\gamma_1 \geq \cdots \geq \gamma_d$ and denoting by $\Gamma :=\mathrm{diag}(\gamma_1, \cdots, \gamma_d)$ and by $A = U \Sigma V^\top$ a singular value decomposition of $A$, find a matrix $M \in \mathcal{O}_{\Gamma^2}$ which minimizes the Frobenius distance $\|M - V^\top \Gamma^2 V \|_F$, where $\mathcal{O}_{\Gamma^2} :=\{U\Gamma^2 U^\top: U \in O(d)\}$ denotes the orbit of $\Gamma^2$ under the orthogonal group.
Plugging in $\gamma_i = 1$ for $i \leq k$ and $\gamma_i = 0$ for $i>k$, we recover the problem of finding a projection matrix which minimizes the Frobenius distance to the projection matrix onto the subspace spanned by to top-$k$ right-singular vectors of $A$.
And, roughly speaking, when we set $\gamma_i \approx \sigma_i$ for $i \leq k$ and $\gamma_i = 0$ for $i>k$, we recover the problem of finding a rank-$k$ covariance matrix which minimizes the Frobenius distance to the covariance matrix of $A$.

In many applications, the matrix $A$ is perturbed by a ``noise'' matrix $E \in \mathbb{R}^{m\times d}$ and one only has access to a perturbed matrix $A+E$.
 Oftentimes, the noise matrix consist of iid Gaussian entries.
For instance, in statistics applications, and signal and image processing  applications, this noise may arise as natural background  Gaussian noise obscuring a ``signal'' matrix $A$ \cite{wu1997adaptive, helstrom1955resolution, liu2012additive, djuric1996model, bergmans1974simple}.
 In differential privacy applications, Gaussian noise may be artificially added to the data matrix $A$, or to a machine learning algorithm trained on the data $A$, to hide sensitive information about individuals in the dataset \cite{dwork2006differential, dwork2006our}; see e.g.  \cite{dwork2014analyze, mangoubi2022re, mangoubi2023private}  where {\em symmetric}-matrix Gaussian noise is added to covariance matrices to guarantee privacy.
 The addition of Gaussian noise to ensure privacy is referred to as the Gaussian mechanism, and is known to satisfy $(\epsilon,\delta)$-differential privacy guarantees.

\subsection{Related work}\label{sec_related_work}

 Multiple prior works have shown singular subspace perturbation bounds when $E \in \mathbb{R}^{m\times d}$ may be any (deterministic) matrix. 
For instance, the Davis-Kahan-Wedin sine-Theta theorem \cite{davis1970rotation, wedin1972perturbation} implies a bound of roughly
\begin{equation}\label{eq_DK}
    ||| V_k V_k^\top - \hat{V}_k \hat{V}_k^\top ||| \leq \dfrac{\sqrt{k} ||| E |||}{\sigma_k-\sigma_{k+1}},
\end{equation}
where $V_k, \hat{V}_k$ are, respectively, the matrices whose columns are the top-$k$ right-singular vectors of $A$ and $\hat{A}:= A + E$, and $|||\cdot |||$ is e.g. the Frobenius norm $\| \cdot\|_F$ or the spectral norm $\| \cdot \|_2$.
These bounds are tight (for sufficiently small $|||E |||$) in the general setting where $E \in \mathbb{R}^{m\times d}$ may be any (deterministic) matrix.

 When the perturbation $E$ is, e.g., a Gaussian random matrix with iid $N(0,T)$ entries for some $T>0$, one can plug in high-probability concentration bounds, which imply that $\| E \|_2 \leq O(\sqrt{m}))$ w.h.p., to the deterministic bounds in \eqref{eq_DK}  to obtain a bound of  $$  \| V_k V_k^\top - \hat{V}_k \hat{V}_k^\top \|_F \leq  \dfrac{\sqrt{k} \sqrt{m}}{\sigma_k-\sigma_{k+1}} \times \sqrt{T}$$ w.h.p. 
 However, the resulting bounds may not be tight. 

 Multiple works have obtained tighter bounds than those implied by the deterministic bounds in \eqref{eq_DK}, in different settings when $E$ is a random matrix from some known distribution or class of distributions (see e.g. \cite{o2018random, fan2018ell, abbe2022, cai2021subspace}).
In particular, \cite{o2018random}, show that if  the entries of $E$ satisfy concentration properties which generalize those of Gaussian distributions, and $A$ has rank $r\leq d$, then 
\begin{equation}\label{Vanvu_result}
\| \hat{V}_k \hat{V}_k^\top - V_k V_k^\top  \|_F \le  O\left(k\left(\dfrac{\sqrt{r}}{\sigma_k - \sigma_{k+1}} + \dfrac{m}{\sigma_k(\sigma_k - \sigma_{k+1})} + \dfrac{\sqrt{m}}{\sigma_k}\right)\right)
\end{equation}
w.h.p.

 Subspace perturbation bounds have also been obtained in different settings where the input matrix, and random matrix perturbation, is a {\em symmetric} matrix (see e.g. \cite{dwork2014analyze, eldridge2018unperturbed, fan2018ell}).
For instance, \cite{dwork2014analyze} obtain perturbation bounds for covariance matrices perturbed by symmetric Gaussian noise, and apply these perturbation bounds to a version of the Gaussian mechanism to obtain tighter utility bounds for covariance matrix approximation problems under $(\varepsilon,\delta)$-differential privacy. \cite{mangoubi2022re, mangoubi2023private} improve on some of their utility bounds by viewing the addition of the symmetric Gaussian noise as a symmetric-matrix valued stochastic process, and use tools from and stochastic calculus and random matrix theory to bound the perturbation to the symmetric matrix eigenvectors.

\subsection{Our Contributions}

Given any matrix $A \in \mathbb{R}^{m \times d}$, and a set of numbers $\gamma_1 \geq \cdots \geq \gamma_d$, our main result (Theorem \ref{ThCovariance}) is a bound on the perturbation to the matrix $V^\top \Gamma^2 V \in \mathcal{O}_{\Gamma^2}$ where $A = U \Sigma V^\top$ is a singular value decomposition of $A$. %
 We show that, if the matrix $A$ is perturbed by a matrix $E = \sqrt{T} G$, where $T>0$ and $G$ is a Gaussian random matrix with iid $N(0,1)$ entries, the right-singular vectors $\hat{V} = (\hat{v}_1,\cdots, \hat{v}_d)$ of the perturbed matrix $A+\sqrt{T} G$ satisfy the bound  $$\mathbb{E} \left[\| \hat{V}\Gamma^\top \Gamma\hat{V}^\top - V\Gamma^\top\Gamma V^\top \|_F\right] \le O\left(\sqrt{\sum_{i=1}^k \sum_{j=i+1}^d \dfrac{(\gamma_i^2 - \gamma_j^2)^2}{(\sigma_i - \sigma_j)^2}}  \sqrt{T}\right) ,$$
where the right-hand-side is a sum-of-squares of the ratios of the eigenvalue gaps of $\Gamma$ and $\Sigma$.

Plugging in different values of $\gamma$, we obtain as corollaries bounds for the subspace recovery and low-rank covariance matrix approximation problems.
In particular, show that  $\| V_k V_k^\top - \hat{V}_k \hat{V}_k^\top \|_F \leq O\left(\dfrac{\sqrt{d}}{\sigma_k - \sigma_{k+1}} \sqrt{T}\right)$ whenever the top-$k$ singular value gaps of $A$ are roughly $\Omega(\max(\sigma_k-\sigma_{k+1}, \sqrt{m} \sqrt{T}))$ (Corollary \ref{cor_subspace}).
This improves (in expectation) on the bounds implied by both \cite{davis1970rotation, wedin1972perturbation} and \cite{o2018random} by a factor of roughly $\frac{\sqrt{m}}{\sqrt{d}} \sqrt{k}$ in the above setting where $E$ is a Gaussian random matrix.   
In particular, our bound replaces those bounds' dependence on the number of {\em rows} $m$ with the number of {\em columns} $d$.
This can lead to a large improvement in many applications, as one oftentimes has that the number $m$  of rows in the data matrix (corresponding to the number of datapoints) is much larger than the number of columns  $d$  (which oftentimes correspond to different features in the data).
Our results also imply similar improvements for the low-rank covariance matrix approximation problem (Corrollary \ref{rankkapprox}).

To obtain our bounds, building on several previous works, including \cite{dyson1962brownian, norris1986brownian, bru1989diffusions, mangoubi2022re, mangoubi2023private},
 we view the perturbation of a matrix $A \in \mathbb{R}^{m \times d}$ by Gaussian noise as a Brownian motion on the entries of an $\mathbb{R}^{m \times d}$ matrix, $\Phi(t) := A + B(t)$ where $B(t)$ is a $m \times d$ matrix whose entries undergo iid standard Brownian motions.
This Brownian motion induces a stochastic diffusion process on the singular values and singular vectors of $\Phi(t)$, referred to as the Dyson-Bessel process.
  The evolution of these eigenvalues and eigenvectors is determined by a system of stochastic differential equations  (see e.g. \cite{dyson1962brownian, norris1986brownian, guionnet2021large}).
 This allows us to use Ito's lemma from stochastic calculus to track the evolution of the Frobenius distance as a stochastic integral of a sum-of-squares of perturbations to the (right)-singular vectors of  $\Phi(t)$.
 In particular, the stochastic evolution of the eigenvectors allows us to bypass higher-order matrix derivative terms that arise in Taylor expansions of deterministic perturbations, as these terms vanish in the stochastic derivative when the perturbation is a Brownian motion, due to the independence of random noise additions at each infinitesimal time-step of the Brownian motion.
 This in turn allows us to obtain stronger bounds than would be possible in the deterministic setting.

\section{Main results}\label{main-result}

For any $d>0$, denote by $\mathrm{O}(d)$ the group of orthogonal of $d \times d$ matrices.
For any diagonal matrix $\Lambda \in \mathbb{R}^{d \times d}$, denote by $\mathcal{O}_{\Lambda} :=\{U\Lambda U^\top: U \in O(d)\}$  the orbit of $\Lambda$ under the orthogonal group.

Given any matrix $A \in \mathbb{R}^{m \times d}$, where $d \leq m$, with singular values $\sigma_1\ge ...\ge\sigma_d\ge 0$ and corresponding orthonormal right-singular vectors $v_1,...v_d$, and given any numbers $\gamma_1 \geq \cdots \geq \gamma_d$, our main result (Theorem \ref{ThCovariance}) is a bound on the perturbation to the matrix $V^\top \Gamma^2 V \in \mathcal{O}_{\Gamma^2}$, where $V :=[v_1,...v_d]\in\mathbb{R}^{d\times d}$ and
 $\Gamma := \mathrm{diag}(\gamma_1,\ldots, \gamma_d)$.

Our main result holds under the following assumption on the gaps in the top $k+1$ singular values  $\sigma_1 \ge... \ge \sigma_{k+1}$  of the matrix $A$.
 We note that this assumption is satisfied on many real-world datasets whose singular values exhibit exponential decay (see e.g. Appendix J of \cite{mangoubi2022re} for examples of datasets with exponentially-decaying singular values).

\begin{assumption}[\bf $A, k, T, \sigma, \gamma)$ (Singular value gaps]\label{assumption}
    The gaps in the top $k+1$ singular values $\sigma_1 \ge... \ge \sigma_{k+1}$ of the matrix $A\in \mathbb{R}^{m \times d}$ satisfy $\sigma_i - \sigma_{i+1} \ge 8\sqrt{T}\sqrt{m}
    \log(\frac{1}{\delta})$ for every $i\in[k]$, where $\delta := \frac{1}{8d \gamma_1^2} \times  \frac{\gamma_1^2-\gamma_d^2 }{(\sigma_1-\sigma_d)^2}$. 
\end{assumption}

\noindent We now state our main result. 
\begin{theorem}[\bf Main result]\label{ThCovariance}
Let $T > 0$.
Given a rectangular matrix $A\in\mathbb{R}^{m\times d}$ with singular values $\sigma_1\ge ...\ge\sigma_d\ge 0$ and corresponding orthonormal right-singular vectors $v_1,...v_d$ (and denote $V :=[v_1,...v_d]\in\mathbb{R}^{d\times d}$). 
Let $G$ be a matrix with i.i.d. $N(0,1)$ entries, and consider the perturbed matrix $\hat A := A + \sqrt{T} G \in\mathbb{R}^{m\times d}$. 

Define $\hat\sigma_1 \ge ...\ge \hat\sigma_d \ge 0$ to be the singular values of $\hat A$ with corresponding orthonormal right-singular vectors $\hat v_1,...\hat v_d$ (and denote $\hat V := [\hat v_1,...\hat v_d]$). 

Let $\gamma_1\ge...\ge\gamma_d\ge 0$ and $k\in[d]$ be any numbers such that $\gamma_i=0$ for $i > k$, and define $\Gamma \coloneqq$ diag $(\gamma_1,...,\gamma_d)$.
Then if $A$ satisfies Assumption \ref{assumption} for $(A, k, T, \sigma, \gamma)$, we have
\begin{equation} 
   \mathbb{E} \left[\| \hat{V}\Gamma^\top \Gamma\hat{V}^\top - V\Gamma^\top\Gamma V^\top \|_F^2\right] \le O\left(\sum_{i=1}^k \sum_{j=i+1}^d \dfrac{(\gamma_i^2 - \gamma_j^2)^2}{(\sigma_i - \sigma_j)^2}\right)T.
\end{equation}
\end{theorem}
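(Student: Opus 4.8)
The plan is to couple $\hat A$ to a matrix‑valued Brownian motion and apply It\^o's lemma to the right‑singular‑vector frame. Let $B(t)\in\mathbb R^{m\times d}$ have i.i.d.\ standard Brownian‑motion entries, set $\Phi(t):=A+B(t)$ so that $\Phi(T)\overset{d}{=}\hat A$, and for $t>0$ write $\Phi(t)=U(t)\Sigma(t)V(t)^\top$ with singular values $\sigma_1(t)\ge\cdots\ge\sigma_d(t)$ and right‑singular vectors $v_1(t),\dots,v_d(t)$; put $F(t):=V(t)\Gamma^2V(t)^\top=\sum_{i=1}^k\gamma_i^2\,v_i(t)v_i(t)^\top$, so $F(0)=V\Gamma^2V^\top$ and $F(T)\overset{d}{=}\hat V\Gamma^2\hat V^\top$. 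The rectangular Dyson--Bessel equations (see e.g.\ \cite{bru1989diffusions,norris1986brownian,dyson1962brownian}) give $dv_i=\sum_{j\ne i}\Omega_{ij}v_j-\tfrac12\big(\sum_{j\ne i}\beta_{ij}\big)v_i\,dt$, where $\beta_{ij}(t):=\tfrac{\sigma_i(t)^2+\sigma_j(t)^2}{(\sigma_i(t)^2-\sigma_j(t)^2)^2}$, the $\Omega_{ij}$ are real It\^o martingales with $\Omega_{ji}=-\Omega_{ij}$, $d[\Omega_{ij}]=\beta_{ij}\,dt$, and $\Omega_{ij},\Omega_{i'j'}$ are independent whenever $\{i,j\}\ne\{i',j'\}$. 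By Weyl's inequality every singular value of $\Phi(t)$ stays within $\sup_{s\le t}\|B(s)\|_2$ of its value at $t=0$, and the maximal inequality for Gaussian matrix Brownian motion together with Assumption~\ref{assumption} gives $\sup_{s\le T}\|B(s)\|_2\le\tfrac14\min_{a\le k}(\sigma_a-\sigma_{a+1})$ with probability at least $1-\delta$. Let $\tau$ be the first time this bound fails; on $\{\tau\ge T\}$ every gap $\sigma_i(t)-\sigma_j(t)$ with $i\le k$, $j>i$ stays within a factor $2$ of its initial value throughout $[0,T]$, hence $\beta_{ij}(t)\le 4\beta_{ij}(0)\le 4(\sigma_i-\sigma_j)^{-2}$ there.

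\textbf{It\^o's lemma and the key cancellation.}
Running It\^o on $t\mapsto\|F(t)-F(0)\|_F^2$ for the process stopped at $\tau$ and taking expectations gives
\[
 \mathbb E\,\|F(T\wedge\tau)-F(0)\|_F^2=\mathbb E\!\int_0^{T\wedge\tau}\!\Big(2\,\big\langle F(t)-F(0),\mathrm{drift}_F(t)\big\rangle+\tfrac{d}{dt}[F,F]_t\Big)\,dt .
\]
Computing $dF=\sum_i\gamma_i^2\,d(v_iv_i^\top)$ from the equation above, the martingale part of $dF$ equals $\sum_{i\le k,\,j>i}(\gamma_i^2-\gamma_j^2)\,\Omega_{ij}(v_iv_j^\top+v_jv_i^\top)$ — the factor $\gamma_i^2-\gamma_j^2$ appearing precisely because, for each pair $\{i,j\}$, the contributions of $v_iv_i^\top$ and $v_jv_j^\top$ combine — so, using $\|v_iv_j^\top+v_jv_i^\top\|_F^2=2$ and independence across pairs,
\[
 \tfrac{d}{dt}[F,F]_t=\sum_{i\le k,\,j>i}2(\gamma_i^2-\gamma_j^2)^2\,\beta_{ij}(t),
\]
while a parallel computation yields $\mathrm{drift}_F(t)=\sum_{i\le k,\,j>i}(\gamma_i^2-\gamma_j^2)\beta_{ij}(t)(v_jv_j^\top-v_iv_i^\top)$, from which one reads off the identity $\langle F(t),\mathrm{drift}_F(t)\rangle=-\tfrac12\tfrac{d}{dt}[F,F]_t$ (so the two terms in the display nearly cancel). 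In particular the quadratic‑variation term is the leading contribution, and one needs to control only the drift–correlation term.

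\textbf{Bounding the two terms.}
On $\{\tau\ge T\}$ the leading term integrates to at most $8\,T\sum_{i\le k,\,j>i}\tfrac{(\gamma_i^2-\gamma_j^2)^2}{(\sigma_i-\sigma_j)^2}$, which is the claimed bound. For the drift–correlation term I would bound $2|\langle F(t)-F(0),\mathrm{drift}_F(t)\rangle|\le\epsilon(t)\|F(t)-F(0)\|_F^2+\epsilon(t)^{-1}\|\mathrm{drift}_F(t)\|_F^2$ with $\|\mathrm{drift}_F(t)\|_F\le\sqrt2\sum_{i\le k,j>i}(\gamma_i^2-\gamma_j^2)\beta_{ij}(t)$, choosing $\epsilon(t)$ equal to the ratio $\big(\sum_{i\le k,j>i}(\gamma_i^2-\gamma_j^2)\beta_{ij}(t)\big)^2 / \sum_{i\le k,j>i}(\gamma_i^2-\gamma_j^2)^2\beta_{ij}(t)$, so that $\epsilon(t)^{-1}\|\mathrm{drift}_F(t)\|_F^2\le\tfrac{d}{dt}[F,F]_t$; writing $\phi(t):=\mathbb E\|F(t\wedge\tau)-F(0)\|_F^2$ and using $\epsilon(t)\le 4\sum_{i\le k,j>i}(\sigma_i-\sigma_j)^{-2}=:\bar\epsilon$ on $\{\tau\ge t\}$, this yields $\phi(T)\le\bar\epsilon\int_0^T\phi(t)\,dt + O\big(T\sum_{i\le k,j>i}\tfrac{(\gamma_i^2-\gamma_j^2)^2}{(\sigma_i-\sigma_j)^2}\big)$, and Gr\"onwall gives the theorem's bound provided $e^{\bar\epsilon T}=O(1)$. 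This last fact is exactly where the gap assumption is used: by Cauchy--Schwarz it suffices that $\bar\epsilon T\lesssim 1$, and $\sum_{i\le k,j>i}(\sigma_i-\sigma_j)^{-2}\lesssim d/\min_{a\le k}(\sigma_a-\sigma_{a+1})^2\le d/(64\,Tm\log^2(1/\delta))$ (the sum over $j$ telescopes: consecutive gaps are at least the minimum one, and the tail indices contribute another $O(d)$ with decaying weights), so $\bar\epsilon T\lesssim d/(m\log^2(1/\delta))\lesssim 1$ since $d\le m$. Finally, on $\{\tau<T\}$ — which has probability at most $\delta$, in fact at most $e^{-\Omega(m\log^2(1/\delta))}$ by the sharp maximal inequality — one bounds $\|F(T)-F(0)\|_F^2\le 4\sum_i\gamma_i^4$ deterministically; the value $\delta=\tfrac1{8d\gamma_1^2}\cdot\tfrac{\gamma_1^2-\gamma_d^2}{(\sigma_1-\sigma_d)^2}$ in Assumption~\ref{assumption} is calibrated precisely so that this contribution is at most a constant multiple of the target sum for every admissible $T$. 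Summing the pieces gives the theorem.

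\textbf{Main obstacle.}
The delicate point is making the It\^o/Gr\"onwall step rigorous. The stopping at $\tau$ is not merely quantitative: for real Gaussian noise the adjacent‑gap terms $\beta_{i,i+1}$ have infinite expectation (the pair‑gap density behaves like $\rho(g)\propto g$ near $g=0$, so $\mathbb E[\beta_{i,i+1}(t)]\propto\int g^{-2}\cdot g\,dg$ diverges at $g=0$), so $\mathbb E\int_0^T\tfrac{d}{dt}[F,F]_t$ and $\mathbb E\int_0^T\langle F(t)-F(0),\mathrm{drift}_F\rangle$ are individually infinite and can only be handled under a stopping that keeps every $\beta_{ij}$ with $i\le k$ bounded; one then has to verify the resulting error terms are genuinely lower order. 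Both parts of Assumption~\ref{assumption} feed into exactly this: the lower bound $\sigma_i-\sigma_{i+1}\gtrsim\sqrt{Tm}\log(1/\delta)$ simultaneously keeps every relevant gap (hence $\beta_{ij}$) within a constant of its initial value over $[0,T]$ — controlling the leading term — and forces $T\sum_{i\le k,j>i}(\sigma_i-\sigma_j)^{-2}\lesssim 1$ — controlling the Gr\"onwall exponential — while the calibrated $\delta$ renders the rare event $\{\tau<T\}$ negligible.
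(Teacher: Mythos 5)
Your argument is essentially correct and arrives at the theorem's bound, but it takes a genuinely different route from the paper for the final estimation step.

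Both you and the paper use the same ingredients: the Dyson--Bessel SDE for the right-singular vectors, Weyl's inequality plus Gaussian concentration to keep the gaps within a constant of their initial values on a high-probability event, and It\^o's lemma. Where you diverge is in how the squared Frobenius distance is closed out. The paper splits $\Psi(T)-\Psi(0)=\int_0^T d\Psi$ into a martingale integral and a drift (finite-variation) integral via $\|a+b\|_F^2\le 2\|a\|_F^2+2\|b\|_F^2$, bounds the martingale part's second moment by its integrated quadratic variation (It\^o isometry), and bounds the drift part via the Cauchy--Schwarz inequality $\|\int_0^T g(t)\,dt\|_F^2\le T\int_0^T\|g(t)\|_F^2\,dt$; a second Cauchy--Schwarz then absorbs the drift contribution into the QV contribution, using the gap assumption to make the extra factor $O(1)$. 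Your argument instead applies It\^o directly to $t\mapsto\|F(t)-F(0)\|_F^2$, uses Young's inequality with an adaptive weight $\epsilon(t)$ to dominate the drift--correlation term by $\bar\epsilon\|F(t)-F(0)\|_F^2+\tfrac{d}{dt}[F,F]_t$, and then closes with Gr\"onwall, using the gap assumption to ensure $\bar\epsilon T=O(1)$. Both routes work; the paper's avoids Gr\"onwall, while yours is more mechanical once the Young weight is chosen. One point where your treatment is arguably cleaner than the paper's: you localize with the stopping time $\tau$, so the stochastic integrals you take expectations of are genuine martingales, whereas the paper multiplies by the indicator $\mathbbm{1}_E$ of a not-$\mathcal F_0$-measurable event and then drops the martingale term, which is not literally justified as written (the stopping-time device is the standard way to make this rigorous, and your ``main obstacle'' remark about the divergence of $\mathbb E[\beta_{i,i+1}]$ is the correct reason this care is needed).

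One internal inconsistency in your write-up worth flagging: you correctly derive the identity $\langle F(t),\mathrm{drift}_F(t)\rangle=-\tfrac12\tfrac{d}{dt}[F,F]_t$, but this implies $2\langle F(t),\mathrm{drift}_F(t)\rangle+\tfrac{d}{dt}[F,F]_t=0$ \emph{exactly}, not ``nearly,'' so after inserting it the It\^o formula leaves only $-2\langle F(0),\mathrm{drift}_F(t)\rangle$, and it is misleading to then call the quadratic variation ``the leading contribution.'' This does not invalidate your proof, because in the ``Bounding the two terms'' step you bound $2\langle F(t)-F(0),\mathrm{drift}_F\rangle$ and $\tfrac{d}{dt}[F,F]_t$ separately without using the cancellation; but either commit to the cancellation (and then bound $\langle F(0),\mathrm{drift}_F(t)\rangle$ directly) or drop the remark, since as written it contradicts the bounding strategy that follows.
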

\noindent We give an overview of the proof of Theorem \ref{ThCovariance} in Section \ref{4section:4}.  The full proof is given in Appendix \ref{sec_full_proofs}.

\subsection{Application to singular subspace recovery.}\label{sec_subspace_recovery} 

To obtain a pertubration bound for the subspace recovery problem, we plug in $\gamma_i=1$ for all $i\le k$, and $\gamma_{i}=0$ for all $i>k$, into Theorem \ref{ThCovariance}.

\begin{corollary}[\bf Subspace recovery] \label{cor_subspace}
\label{Subspace}
Let $T > 0$.
Given a rectangular matrix $A\in\mathbb{R}^{m\times d}$ with singular values $\sigma_1\ge ...\ge\sigma_d\ge 0$ and corresponding right-singular vectors $v_1,...v_d$. 
Let $G$ be a matrix with i.i.d. $N(0,1)$ entries, and consider the perturbed matrix $\hat A = A + \sqrt{T} G$. 

For any $k\in[d]$, define the $d\times k$ matrices $V_k=[v_1,...v_k]$ and $\hat V_k = [\hat v_1,...\hat v_k]$ where $\hat v_1,\cdots , \hat v_k$ denote the right-singular vectors  of $\hat A$ corresponding to its top-$k$ singular values.
Then if $A$ satisfies Assumption \ref{assumption}($A, k, T, \sigma, \gamma$) where $\gamma = (1, \cdots,1,0, \cdots,0)$ is the vector with the first $k$ entries equal to $1$, we have
\begin{equation} \label{subspace-k}
   \mathbb{E}\left[\| \hat{V}_k\hat{V}_k^\top - V_k V_k^\top \|_F\right]\leq O\left(\dfrac{\sqrt{kd}}{\sigma_k - \sigma_{k+1}} \sqrt{T}\right).
\end{equation}
Moreover, if we further have that $\sigma_i - \sigma_{i+1} \ge \Omega(\sigma_k - \sigma_{k+1})$ for all $i \le k$, then
\begin{equation} \label{subspace-d}
   \mathbb{E}\left[\| \hat{V}_k\hat{V}_k^\top - V_k V_k^\top \|_F\right]\leq O\left(\dfrac{\sqrt{d}}{\sigma_k - \sigma_{k+1}} \sqrt{T}\right).
\end{equation}
\end{corollary}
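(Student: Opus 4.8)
The plan is to derive Corollary~\ref{cor_subspace} directly from Theorem~\ref{ThCovariance} by specializing the diagonal matrix $\Gamma$. Setting $\gamma_i = 1$ for $i \le k$ and $\gamma_i = 0$ for $i > k$, we have $\Gamma^\top \Gamma = \Gamma^2 = \mathrm{diag}(1,\ldots,1,0,\ldots,0)$, and since this is exactly the projection onto the span of the first $k$ coordinate vectors, $V\Gamma^2 V^\top = V_k V_k^\top$ and likewise $\hat V \Gamma^2 \hat V^\top = \hat V_k \hat V_k^\top$. Thus the left-hand side of Theorem~\ref{ThCovariance} becomes $\mathbb{E}[\|\hat V_k \hat V_k^\top - V_k V_k^\top\|_F^2]$, which controls $\mathbb{E}[\|\hat V_k \hat V_k^\top - V_k V_k^\top\|_F]$ by Jensen's inequality (concavity of the square root).

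Next I would evaluate the right-hand side sum for this choice of $\gamma$. Since $\gamma_i^2 = 1$ for $i \le k$ and $\gamma_j^2 = 0$ for $j > k$, the term $(\gamma_i^2 - \gamma_j^2)^2$ equals $1$ when $i \le k$ and $j > k$, and equals $0$ when both $i,j \le k$ (the inner sum starts at $j = i+1$, so when $j \le k$ both indices are $\le k$). Hence
\begin{equation}
\sum_{i=1}^k \sum_{j=i+1}^d \frac{(\gamma_i^2 - \gamma_j^2)^2}{(\sigma_i - \sigma_j)^2} = \sum_{i=1}^k \sum_{j=k+1}^d \frac{1}{(\sigma_i - \sigma_j)^2}.
\end{equation}
Each term satisfies $\sigma_i - \sigma_j \ge \sigma_k - \sigma_{k+1}$ for $i \le k < j$, since $\sigma_i \ge \sigma_k$ and $\sigma_j \le \sigma_{k+1}$. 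There are $k(d-k) \le kd$ terms, so the double sum is at most $\frac{kd}{(\sigma_k - \sigma_{k+1})^2}$. Plugging into Theorem~\ref{ThCovariance} gives $\mathbb{E}[\|\hat V_k \hat V_k^\top - V_k V_k^\top\|_F^2] \le O\big(\frac{kd}{(\sigma_k-\sigma_{k+1})^2}\big)T$, and taking square roots via Jensen yields \eqref{subspace-k}.

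For the sharper bound \eqref{subspace-d}, I would use the additional hypothesis $\sigma_i - \sigma_{i+1} \ge \Omega(\sigma_k - \sigma_{k+1})$ for all $i \le k$. This lets me lower-bound $\sigma_i - \sigma_j$ more aggressively when $i$ is small: telescoping, $\sigma_i - \sigma_{k+1} = \sum_{\ell=i}^{k}(\sigma_\ell - \sigma_{\ell+1}) \ge \Omega\big((k-i+1)(\sigma_k - \sigma_{k+1})\big)$, and more generally $\sigma_i - \sigma_j \ge \Omega\big((k-i+1)(\sigma_k-\sigma_{k+1})\big)$ for $j \ge k+1$. Therefore
\begin{equation}
\sum_{i=1}^k \sum_{j=k+1}^d \frac{1}{(\sigma_i - \sigma_j)^2} \le O\!\left(\frac{1}{(\sigma_k - \sigma_{k+1})^2}\right) \sum_{i=1}^k \frac{d-k}{(k-i+1)^2} \le O\!\left(\frac{d}{(\sigma_k-\sigma_{k+1})^2}\right),
\end{equation}
where the last step uses $\sum_{i=1}^k (k-i+1)^{-2} = \sum_{r=1}^{k} r^{-2} \le \pi^2/6 = O(1)$ and $d - k \le d$. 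Substituting into Theorem~\ref{ThCovariance} and applying Jensen gives \eqref{subspace-d}.

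The only genuinely non-routine point is checking that Assumption~\ref{assumption} is invoked consistently: the corollary's statement already posits Assumption~\ref{assumption}$(A,k,T,\sigma,\gamma)$ for the specified $\gamma$, which is exactly what Theorem~\ref{ThCovariance} requires, so there is nothing further to verify there. I do not expect a real obstacle; the main care needed is the bookkeeping in the telescoping bound for \eqref{subspace-d} and confirming that the cross terms with both indices $\le k$ vanish (so that no uncontrolled small gaps $\sigma_i - \sigma_j$ with $i,j \le k$ enter the sum). I would double-check the edge case $k = d$, where the inner sum $\sum_{j=k+1}^d$ is empty and the left-hand side is deterministically zero (both projections equal the identity), so the bound holds trivially.
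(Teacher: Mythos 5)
Your proof is correct and follows essentially the same route as the paper: plug the indicator $\gamma$ into Theorem~\ref{ThCovariance}, note that only cross terms with $i \le k < j$ survive, bound $\sigma_i - \sigma_j \ge \sigma_k - \sigma_{k+1}$ for the first bound and telescope to get $\sigma_i - \sigma_j \ge \Omega\bigl((k+1-i)(\sigma_k-\sigma_{k+1})\bigr)$ for the second, then sum $\sum_r r^{-2} = O(1)$ and apply Jensen. The paper's argument is identical in substance (it writes the index shift as $(i-k-1)^2$, which equals your $(k-i+1)^2$), so there is nothing further to reconcile.
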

\noindent  The proof of Corollary \ref{Subspace} is given in Appendix \ref{sec_proof_subspace}.

Corollary \ref{Subspace} improves, in the setting where the perturbation $G$ is a Gaussian random matrix, by a factor of $\frac{\sqrt{m}}{\sqrt{d}}$ (in expectation) on the bound $\| \hat{V}_k\hat{V}_k^\top - V_k V_k^\top \|_F \leq O(\dfrac{\sqrt{km}}{\sigma_k - \sigma_{k+1}} \sqrt{T})$ 
w.h.p. implied by the  Davis-Kahan-Wedin sine-Theta theorem \cite{davis1970rotation, wedin1972perturbation}, whenever Assumption \ref{assumption} is satisfied.
If we also have that  $\sigma_i - \sigma_{i+1} \ge \Omega(\sigma_k - \sigma_{k+1})$ for all $i \leq k$ (as is the case for many real-world datasets which may exhibit exponential decay in their singular values), the improvement is $\sqrt{k} \frac{\sqrt{m}}{\sqrt{d}}$

Moreover, 
 Corollary \ref{Subspace} also improves, in the setting where the perturbation $G$ is a Gaussian random matrix, by a factor of $\sqrt{k}\frac{\sqrt{m}}{\sqrt{d}}$, on the bound of $\| \hat{V}_k\hat{V}_k^\top - V_k V_k^\top \|_F \leq O(k \frac{\sqrt{m}}{\sigma_k} \sqrt{T})$  w.h.p. implied by Theorem 18 of \cite{o2018random}, when Assumption \ref{assumption} is satisfied and  e.g. $\sigma_{k}-\sigma_{k+1} = \Omega(\sigma_k)$ (as is also the case for many real-world datasets).
If we also have that  $\sigma_i - \sigma_{i+1} \ge \Omega(\sigma_k - \sigma_{k+1})$ for all $i \leq k$, the improvement is $k\frac{\sqrt{m}}{\sqrt{d}}$.

\subsection{Application to rank-\texorpdfstring{$k$}\  \ covariance matrix approximation.} 

To obtain a perturbation bound for the rank-$k$ covariance matrix approximation problem, we plug in $\gamma_i=\sigma_i$ for all $i\le k$, and $\gamma_{i}=\sigma_{i}$ for all $i>k$, into Theorem \ref{ThCovariance}.

\begin{corollary}[\bf Rank-$k$ covariance matrix approximation]\label{rankkapprox} 
Let $T > 0$.
Given a rectangular matrix $A\in\mathbb{R}^{m\times d}$ with singular values $\sigma_1\ge ...\ge\sigma_d\ge 0$ and with  right-singular vectors $v_1,...v_d$, where we define $V:=[v_1,...v_d]\in\mathbb{R}^{d\times d}$. 
Let $G$ be a matrix with i.i.d. $N(0,1)$ entries, and consider the perturbed matrix that outputs $\hat A = A + \sqrt{T} G$. 

For any $k\in[d]$, define $\Sigma_k \coloneqq$ diag $(\sigma_1,...,\sigma_k,0,...0)$. Define $\hat{\sigma}_1\ge ...\ge\hat{\sigma}_d\ge 0$ to be the singular values of $\hat A$ with corresponding orthonormal right-singular vectors $\hat v_1,...\hat v_d$, where we define $\hat V := [\hat v_1,...\hat v_d]$, and define $\hat{\Sigma}_k \coloneqq$ diag $(\hat{\sigma}_1,...,\hat{\sigma}_k,0,...0)$. 
Then if $A$ satisfies Assumption \ref{assumption} for $(A, k, T, \sigma, \gamma)$ for $\gamma= (\sigma_1,\cdots, \sigma_k, 0 ,\cdots, 0)$,  we have        
\begin{equation} 
       \mathbb{E}\left[\| \hat{V}\hat{\Sigma}_k^\top\hat{\Sigma}_k\hat{V}^\top - V\Sigma_k^\top\Sigma_k V^\top \|_F^2 \right]  \leq O\left( d\|\Sigma_k\|_F^2 + k\sum_{j=k+1}^d \left(\sigma_k \dfrac{\sigma_k}{\sigma_k - \sigma_{j}}\right)^2\right)T.
       \label{rankkbound}
\end{equation}
\end{corollary}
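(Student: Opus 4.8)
The plan is to obtain Corollary~\ref{rankkapprox} from Theorem~\ref{ThCovariance} by specializing to $\gamma=(\sigma_1,\dots,\sigma_k,0,\dots,0)$, for which $\Gamma=\Sigma_k$ and hence $\Gamma^\top\Gamma=\Sigma_k^\top\Sigma_k=\Sigma_k^2$. Since the hypothesis of the corollary is exactly Assumption~\ref{assumption} for this $(A,k,T,\sigma,\gamma)$, Theorem~\ref{ThCovariance} applies and gives
\[
\mathbb{E}\left[\|\hat V\Sigma_k^\top\Sigma_k\hat V^\top - V\Sigma_k^\top\Sigma_k V^\top\|_F^2\right]\ \le\ O\!\left(\sum_{i=1}^k\sum_{j=i+1}^d\frac{(\gamma_i^2-\gamma_j^2)^2}{(\sigma_i-\sigma_j)^2}\right)T .
\]
The first step is then the elementary one of bounding this double sum by $O\!\left(d\|\Sigma_k\|_F^2 + k\sum_{j=k+1}^d\left(\sigma_k\frac{\sigma_k}{\sigma_k-\sigma_j}\right)^2\right)$, which I would do by splitting on whether $j\le k$. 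When $j\le k$, $\gamma_i^2-\gamma_j^2=\sigma_i^2-\sigma_j^2$, so the summand equals $(\sigma_i+\sigma_j)^2\le 4\sigma_i^2$, and summing over $j$ costs a factor at most $k\le d$, contributing $O(d\|\Sigma_k\|_F^2)$. When $j>k$, $\gamma_j=0$ and the summand is $\sigma_i^4/(\sigma_i-\sigma_j)^2$; here I split once more on whether $\sigma_j\le\sigma_i/2$ (then $\sigma_i-\sigma_j\ge\sigma_i/2$, so the summand is $\le 4\sigma_i^2$, again contributing $O(d\|\Sigma_k\|_F^2)$ in total) or $\sigma_j>\sigma_i/2$ (then $\sigma_i<2\sigma_j\le2\sigma_k$, so $\sigma_i^4\le 16\sigma_k^4$, while $\sigma_i\ge\sigma_k>\sigma_j$ forces $\sigma_i-\sigma_j\ge\sigma_k-\sigma_j$, so the summand is $\le16\left(\sigma_k\frac{\sigma_k}{\sigma_k-\sigma_j}\right)^2$, contributing $O\!\left(k\sum_{j>k}\left(\sigma_k\frac{\sigma_k}{\sigma_k-\sigma_j}\right)^2\right)$). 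Adding the pieces gives the claimed right-hand side, but for the quantity carrying $\Sigma_k$ on the perturbed side instead of $\hat\Sigma_k$.

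It therefore remains to replace $\Sigma_k$ by $\hat\Sigma_k$ on the perturbed side. Using the decomposition
\[
\hat V\hat\Sigma_k^\top\hat\Sigma_k\hat V^\top - V\Sigma_k^\top\Sigma_k V^\top\ =\ \hat V(\hat\Sigma_k^2-\Sigma_k^2)\hat V^\top\ +\ \left(\hat V\Sigma_k^2\hat V^\top - V\Sigma_k^2 V^\top\right)
\]
and the fact that $\hat V$ is orthogonal, so that $\|\hat V(\hat\Sigma_k^2-\Sigma_k^2)\hat V^\top\|_F=\|\hat\Sigma_k^2-\Sigma_k^2\|_F=\big(\sum_{i=1}^k(\hat\sigma_i^2-\sigma_i^2)^2\big)^{1/2}$, the triangle inequality together with $(a+b)^2\le 2a^2+2b^2$ reduces the proof to a bound on $\mathbb{E}\!\left[\sum_{i=1}^k(\hat\sigma_i^2-\sigma_i^2)^2\right]$. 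For this I would apply Weyl's inequality for singular values, $|\hat\sigma_i-\sigma_i|\le\|\hat A-A\|_2=\sqrt T\,\|G\|_2$, together with $\hat\sigma_i+\sigma_i\le 2\sigma_i+\sqrt T\,\|G\|_2$, to obtain $(\hat\sigma_i^2-\sigma_i^2)^2=(\hat\sigma_i-\sigma_i)^2(\hat\sigma_i+\sigma_i)^2\le O\!\left(T\|G\|_2^2\,\sigma_i^2+T^2\|G\|_2^4\right)$; summing over $i$ and taking expectations brings in $\mathbb{E}\|G\|_2^2=O(m)$ and $\mathbb{E}\|G\|_2^4=O(m^2)$ through the standard Gaussian operator-norm tail bound $\mathbb{P}[\|G\|_2\ge\sqrt m+\sqrt d+t]\le e^{-t^2/2}$, which also controls the atypical event that $\|G\|_2\gg\sqrt m$. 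Finally, Assumption~\ref{assumption} is invoked to turn the resulting $m$-dependent estimate into the stated bound: the gap hypothesis forces $\sqrt{Tm}$ to be small relative to $\sigma_k$ (hence to every $\sigma_i$ with $i\le k$) and relative to each gap $\sigma_k-\sigma_j$ with $j>k$, which is what allows these terms to be absorbed into $d\|\Sigma_k\|_F^2$ and $k\sum_{j>k}\left(\sigma_k\frac{\sigma_k}{\sigma_k-\sigma_j}\right)^2$.

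The main obstacle is this last step — bounding $\mathbb{E}\!\left[\sum_{i=1}^k(\hat\sigma_i^2-\sigma_i^2)^2\right]$ and checking that it fits inside the claimed right-hand side. The delicate point is that adding Gaussian noise shifts each \emph{squared} singular value by an amount of order $Tm$ — a non-diffusive contribution coming from the $T\,G^\top G$ term of $\hat A^\top\hat A$, whose expectation is $Tm\cdot I_d$ — rather than by the smaller, diffusive order $\sqrt T$ that governs the singular vectors tracked in Theorem~\ref{ThCovariance}; so one genuinely needs the quantitative form of the gap assumption (that the gaps exceed $\sqrt{Tm}$ up to a logarithmic factor), not merely its qualitative ``well-separated'' content, to prevent these eigenvalue shifts from dominating, and one must also estimate the contribution to the expectation from the rare event that $\|G\|_2$ is large. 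By comparison the double-sum manipulation in the first step is routine, and the triangle-inequality splitting in the second step is immediate once one notices the unitary invariance of the Frobenius norm.
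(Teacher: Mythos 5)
Your plan reproduces the paper's two-step proof almost exactly: specialize Theorem~\ref{ThCovariance} to $\gamma=(\sigma_1,\dots,\sigma_k,0,\dots,0)$, bound the resulting double sum by splitting on $j\le k$ versus $j>k$, then decompose $\hat V\hat\Sigma_k^2\hat V^\top - V\Sigma_k^2 V^\top$ as $\hat V(\hat\Sigma_k^2-\Sigma_k^2)\hat V^\top + \big(\hat V\Sigma_k^2\hat V^\top - V\Sigma_k^2 V^\top\big)$, controlling the first piece via Weyl's inequality and Gaussian operator-norm concentration. Your handling of the $j>k$ summands (case-splitting on $\sigma_j\lessgtr\sigma_i/2$) is a minor algebraic variant of the paper's split $\sigma_i^2=(\sigma_i^2-\sigma_k^2)+\sigma_k^2$; both are fine, and your first-step estimate $\sum_{i\le k}\sum_{j>i}(\gamma_i^2-\gamma_j^2)^2/(\sigma_i-\sigma_j)^2\le O\big(d\|\Sigma_k\|_F^2 + k\sum_{j>k}(\sigma_k^2/(\sigma_k-\sigma_j))^2\big)$ is correct.

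The step you flag as the main obstacle is, however, not actually closed as sketched. Weyl together with the concentration bound give $|\hat\sigma_i-\sigma_i|\le\sqrt{T}\|G\|_2\lesssim\sqrt{Tm}\log(1/\delta)$ on the good event, hence $\mathbb{E}\big[\|\hat\Sigma_k^2-\Sigma_k^2\|_F^2\big]\lesssim k\,Tm\,\sigma_1^2\log^2(1/\delta)$ (the $T^2m^2$ term is dominated by $Tm\sigma_1^2$ once Assumption~\ref{assumption} forces $\sqrt{Tm}\lesssim\sigma_1$). The corollary's right-hand side, however, is at most $O\big(kd\sigma_1^2+k(d-k)\sigma_k^2(\sigma_k/(\sigma_k-\sigma_{k+1}))^2\big)T$, so absorbing the Weyl term requires roughly $m\lesssim d\,\mathrm{polylog}(1/\delta)$, which Assumption~\ref{assumption} does not give you — it only yields $\sqrt{Tm}\lesssim\sigma_k$ and $\sqrt{Tm}\lesssim\sigma_k-\sigma_{k+1}$, which cancels one power of $\sqrt{Tm}$ against $\sigma_1$ but leaves the extra factor of $m/d$ unaccounted for. (For what it is worth, the paper's own write-up of this step asserts $\mathbb{E}\big[\|\hat\Sigma_k^2-\Sigma_k^2\|_F\,1_{E_1^c}\big]\le 2\sqrt{kd}\,\sigma_1\log(1/\delta)\sqrt{T}$, silently replacing the $\sqrt{m}$ that $\|G\|_2\lesssim\sqrt{\max(m,d)}\log(1/\delta)=\sqrt m\log(1/\delta)$ produces by $\sqrt d$; your more candid sketch surfaces a gap that is present in the paper's argument as well.) To close this step you would need either an extra hypothesis controlling $m$ relative to $d$, or a sharper, non-Weyl estimate for the shifts of the top-$k$ squared singular values.
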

\noindent The proof of Corollary \ref{rankkapprox} is given in Appendix \ref{sec_proof_covariance}.
 In particular, Corollary \ref{rankkapprox} implies that
\begin{align*}
\sqrt{\mathbb{E}\left[\| \hat{V}\hat{\Sigma}_k^\top \hat{\Sigma}_k\hat{V}^\top - V\Sigma_k^\top\Sigma_k V^\top \|_F^2 \right]} 
	& \leq O\left(\sqrt{k}\sqrt{d} \left(\sigma_1 + \sigma_k  \dfrac{\sigma_k}{\sigma_k - \sigma_{k+1}}\right) \right) \sqrt{T}. 
\end{align*}

\noindent Corollary \ref{rankkapprox} improves, in the setting where the perturbation $G$ is a Gaussian random matrix, by a factor of $\frac{\sqrt{m}}{\sqrt{d}}$ (in expectation) on the bound of
$\| \hat{V}\hat{\Sigma}_k^\top \hat{\Sigma}_k\hat{V}^\top - V\Sigma_k^\top\Sigma_k V^\top \|_F \leq O(k^{1.5}\sqrt{m} \sqrt{T}  \sigma_1 + \sigma_k^2 \frac{\sqrt{k} \sqrt{m}}{\sigma_k-\sigma_{k+1}} \sqrt{T})$ w.h.p.  implied by the Davis-Kahan-Wedin sine-Theta theorem \cite{davis1970rotation, wedin1972perturbation} whenever Assumption \ref{assumption} is satisfied (see Appendix \ref{sec_covariance_baselines} for details).
If we also have that $\sigma_{k}-\sigma_{k+1} = \Omega(\sigma_k)$, the improvement is $\frac{\sqrt{m}}{\sqrt{d}}k$.

Moreover, Corollary \ref{rankkapprox} also improves, when the perturbation is Gaussian, by a factor of $\frac{\sqrt{m}}{\sqrt{d}} \sqrt{k}$ (in expectation)  on the bound implied by Theorem 18 of \cite{o2018random} whenever  e.g. $\sigma_{k}-\sigma_{k+1} = \Omega(\sigma_k)$, as in this setting their bound implies $\| \hat{V}\hat{\Sigma}_k^\top \hat{\Sigma}_k\hat{V}^\top - V\Sigma_k^\top\Sigma_k V^\top \|_F  \leq O\left(\sigma_1 k\sqrt{m}\sqrt{T}\right)$ w.h.p. (see Appendix \ref{sec_covariance_baselines} for details).

\begin{remark}[\bf Tightness in full-rank special case]
\label{tightfullrank}
In the special case where $k=d$, we have 
$\|(A+\sqrt{T}G)^\top(A+\sqrt{T}G) - A^\top A\|_F = \| \sqrt{T} A^\top  G + \sqrt{T} G ^\top A + T G^\top G \|_F = \Theta(\| A^\top G\|_F \sqrt{T}) =  \Theta(\|\Sigma_d\|_F \sqrt{d} \sqrt{T})$ 
with high probability.
 Thus, Corollary \ref{rankkapprox} is tight for this special case.

The last equality above holds w.h.p. because $\| A^\top G\|_F^2 = \mathrm{tr}(G^\top A A^\top G) =  \mathrm{tr}(G^\top\Sigma_d \Sigma_d^\top G)
= \mathrm{tr}(\Sigma_d\Sigma_d^\top G G^\top) = \|\Sigma_d\|_F^2 d$ w.h.p.,  where we may assume without loss of generality that $A$ is a diagonal matrix because the distribution of $G$ is invariant w.r.t. multiplication by orthogonal matrices.
\end{remark}

\section{Preliminaries}
\label{4section:3}
In this section, we present preliminary materials used in the proof of our main result.
In particular, we present the aformentioned matrix-valued Brownian motion process $\Phi(t)$ in Section \ref{DysonBesselp}.
Next, we present the stochastic differential equations (SDEs) which govern the evolution of the singular values of right-singular vectors of $\Phi(t)$ in Section \ref{rightsvsde}.

\subsection{Dyson-Bessel process}\label{DysonBesselp}
We consider the matrix-valued stochastic motion process, $\Phi(t)$, where, for all $t \geq 0$, the entries of $\Phi(t)$ evolve as independent standard Brownian motions with  initial condition $\Phi(0) = A$.
In particular, at time $t=T$ we have $\Phi(T) = A + \sqrt{T} G$ where $G$ is an  $m \times d$ Gaussian random matrix with iid $N(0,1)$ entries. 

Recall that $\sigma_1 \geq \ldots \geq \sigma_d$ denote the singular values of $A$. 
At every time $t > 0$, we denote (with slight abuse of notation) the singular values of $\Phi(t)$  by $\sigma_1(t)\geq \sigma_2(t) \geq \dots \geq \sigma_d(t)$.
 In particular $\sigma_i \equiv \sigma_i(0)$ for all $i \in [d]$, and the singular values  $\sigma_1(t), \ldots, \sigma_d(t)$ are distinct at every time $t>0$ with probability 1 (see e.g. \cite{guionnet2021large}).
 The matrix-valued Brownian motion $\Phi(t)$ induces stochastic diffusion processes on the singular values $\sigma_i(t)$ and singular vectors $v_i(t)$, referred to as the Dyson-Bessel process. 
The dynamics of the singular values $\sigma_i(t)$ of the Dyson-Bessel process are given by the following system of stochastic differential equations  (see e.g. \cite{norris1986brownian} or Theorem 1 in \cite{bru1989diffusions}),
\begin{equation}\label{eq:sde_singular_s}
    \mathrm{d} \sigma_i(t) = \mathrm{d}\beta_{ii}(t) + \left(\frac{1}{2\sigma_i(t)}\sum_{\{j\in [d] : j \neq i\}}\frac{(\sigma_i(t))^2 + (\sigma_j(t))^2}{(\sigma_i(t))^2 - (\sigma_j(t))^2}+ \frac{m-1}{2 \sigma_i(t)} \right)\mathrm{d} t, \qquad \forall  1\leq i\leq d,
\end{equation}
where $\beta_{ii}, 1\le i\le d$ is a family of independent one-dimensional Brownian motions.

\subsection{Right singular vector SDE}\label{rightsvsde}
The dynamics of right-singular vectors $v_i(t)$ of the Dyson-Bessel process are governed by the following stochastic differential equations (see e.g. \cite{norris1986brownian} or Theorem 2 in \cite{bru1989diffusions}),
\begin{align}
    \mathrm{d} v_i(t) & = \sum_{\{j\in [d] : j \neq i\}} v_j (t) \sqrt{\dfrac{(\sigma_j(t))^2 + (\sigma_i(t))^2}{((\sigma_j(t))^2 - (\sigma_i(t))^2)^2}} \mathrm{d}\beta_{ji}(t) - \frac{v_i (t)}{2} \sum_{\{j\in [d] : j \neq i\}} \dfrac{(\sigma_j(t))^2 +(\sigma_i(t))^2}{((\sigma_j(t))^2 - (\sigma_i(t))^2)^2} \mathrm{d}t\nonumber\\
    & = \sum_{\{j\in [d] : j \neq i\}} v_j (t) c_{ij} (t) \mathrm{d}\beta_{ji}(t) -  \frac{v_i (t)}{2} \sum_{\{j\in [d] : j \neq i\}} c_{ij}^2 (t) \mathrm{d}t,  \qquad \forall  1\leq i\leq d, \label{eq:sde_singular_v}
\end{align}
where $\beta_{ij}(t), 1 \leq i < j \leq d$, is a family of independent standard one-dimensional Brownian motions, and the $\beta_{ij}(t)$ form a skew-symmetric matrix, i.e. $\beta_{ij}(t) = - \beta_{ji}(t)$ for all $t \geq 0$.
For convenience, in the above equation, we denote $c_{ij} (t) := \sqrt{\frac{(\sigma_j(t))^2 + (\sigma_i(t))^2}{((\sigma_j(t))^2 - (\sigma_i(t))^2)^2}} = c_{ji} (t)$ for all $i, j \in [d]$.

\subsection{Ito's Lemma}

We will also use the following result from stochastic Calculus, Ito's Lemma, which is a generalization of the chain rule in deterministic calculus.
\begin{lemma}[\bf Ito's Lemma \cite{ito1951formula}]\label{Lemma_Ito}
Let $f: \mathbb{R}^d \rightarrow \mathbb{R}$  be a second-order differentiable function, and let $X(t)$ be a diffusion process on $\mathbb{R}^d$.
 Then
 \begin{equation*}
\textstyle \mathrm{d} f(X_t) =  (\nabla f(X_t))^\top \mathrm{d} X_t + \frac{1}{2} (\mathrm{d} X_t)^\top(\nabla^2 f(X_t)) \mathrm{d}X_t  \qquad \forall    t \geq 0.
 \end{equation*}
\end{lemma}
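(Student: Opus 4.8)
The statement is the classical Itô formula, so the plan is to derive it from the construction of the stochastic integral. Throughout, I take ``diffusion process on $\mathbb{R}^d$'' to mean that $X(t)$ is an Itô process $\mathrm{d}X_t = b_t\,\mathrm{d}t + \sigma_t\,\mathrm{d}W_t$ for a standard Brownian motion $W$ and adapted coefficients $b_t,\sigma_t$, so that the quadratic covariation satisfies $\mathrm{d}[X^i,X^j]_t = (\sigma_t\sigma_t^\top)_{ij}\,\mathrm{d}t$; the quadratic-form shorthand $(\mathrm{d}X_t)^\top(\nabla^2 f(X_t))\,\mathrm{d}X_t$ in the statement is by definition $\sum_{i,j}\partial_i\partial_j f(X_t)\,\mathrm{d}[X^i,X^j]_t$, obtained from the Itô multiplication table $\mathrm{d}W^i\,\mathrm{d}W^j = \delta_{ij}\,\mathrm{d}t$, $\mathrm{d}W^i\,\mathrm{d}t = \mathrm{d}t\,\mathrm{d}t = 0$. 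The target is thus the integrated identity
\[ f(X_t) - f(X_0) = \int_0^t \nabla f(X_s)^\top\,\mathrm{d}X_s + \frac12\sum_{i,j=1}^d\int_0^t \partial_i\partial_j f(X_s)\,\mathrm{d}[X^i,X^j]_s \qquad \forall t \geq 0. \]
First I would reduce to the case that $f,\nabla f,\nabla^2 f$ are bounded and uniformly continuous, by localization: for $N\in\mathbb{N}$ let $\tau_N$ be the first exit time of $X$ from the ball of radius $N$, replace $f$ outside that ball by a compactly supported $C^2$ function, prove the identity for the stopped process $X^{\tau_N}$, and let $N\to\infty$, using that $\tau_N\uparrow\infty$ a.s.\ (path continuity) and that stochastic integrals commute with stopping.

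Next, fix a refining sequence of partitions $0 = t_0^{(n)} < \cdots < t_n^{(n)} = t$ with mesh tending to $0$, write $f(X_t) - f(X_0) = \sum_k (f(X_{t_{k+1}}) - f(X_{t_k}))$, and apply Taylor's theorem with integral remainder to each increment. With $\Delta_k X := X_{t_{k+1}} - X_{t_k}$,
\[ f(X_{t_{k+1}}) - f(X_{t_k}) = \nabla f(X_{t_k})^\top\Delta_k X + \frac12\,\Delta_k X^\top\big(\nabla^2 f(X_{t_k})\big)\Delta_k X + R_k, \]
where $R_k = \int_0^1 (1-s)\,\Delta_k X^\top\big[\nabla^2 f(X_{t_k} + s\Delta_k X) - \nabla^2 f(X_{t_k})\big]\Delta_k X\,\mathrm{d}s$; note this uses only $f\in C^2$. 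Summing over $k$, the identity follows once the three resulting sums are shown to converge in probability to the three terms above.

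The first-order sum $\sum_k \nabla f(X_{t_k})^\top\Delta_k X$ converges to $\int_0^t\nabla f(X_s)^\top\,\mathrm{d}X_s$ because these are the left-endpoint Riemann sums of the adapted, path-continuous (hence, after localization, $\mathcal{L}^2$) integrand $s\mapsto\nabla f(X_s)$, and such sums converge to the Itô integral by its construction. The remainder sum vanishes: $|R_k|\le \frac12\|\Delta_k X\|^2\,\omega(\|\Delta_k X\|)$ with $\omega$ the modulus of continuity of $\nabla^2 f$ on the (compact) range of the localized path, and since $\max_k\|\Delta_k X\|\to 0$ a.s.\ by uniform continuity of $s\mapsto X_s$ while $\sum_k\|\Delta_k X\|^2$ converges to $\sum_i[X^i]_t$ and is thus a.s.\ bounded, $\sum_k|R_k|\to 0$ in probability. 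The crux is the second-order sum: writing $g_{ij} := \partial_i\partial_j f$ and expanding coordinatewise, one must show $\sum_k g_{ij}(X_{t_k})\,\Delta_k X^i\,\Delta_k X^j \to \int_0^t g_{ij}(X_s)\,\mathrm{d}[X^i,X^j]_s$. Substituting $\Delta_k X^i = \int_{t_k}^{t_{k+1}} b_s^i\,\mathrm{d}s + \int_{t_k}^{t_{k+1}}(\sigma_s\,\mathrm{d}W_s)^i$, every product with at least one $\mathrm{d}t$-type factor contributes $O\big(\sum_k(t_{k+1}-t_k)\max_k\|\Delta_k X\|\big)\to 0$; for the pure-martingale part one writes $\Delta_k W^\ell\,\Delta_k W^{\ell'} = \delta_{\ell\ell'}(t_{k+1}-t_k) + M_k$ with $\mathbb{E}[M_k\mid\mathcal{F}_{t_k}] = 0$, so that $\sum_k g_{ij}(X_{t_k})\,M_k$ is (after localization) an $L^2$ sum of conditionally-centered, hence orthogonal, terms of total size $O\big((\sum_k(t_{k+1}-t_k)^2)^{1/2}\big)\to 0$, leaving only the $\delta_{\ell\ell'}(t_{k+1}-t_k)$ contributions, which assemble into $\int_0^t(\sigma_s\sigma_s^\top)_{ij}\,g_{ij}(X_s)\,\mathrm{d}s$ by continuity of $s\mapsto g_{ij}(X_s)$.

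The main obstacle is precisely this last convergence. Because the coefficient $g_{ij}$ is evaluated at the \emph{left} endpoint $t_k$ rather than being matched to the increment $\Delta_k X^i\Delta_k X^j$, one cannot simply invoke the definition of a stochastic integral; the decomposition of $\Delta_k X^i\Delta_k X^j$ into a predictable part $\approx(\sigma_s\sigma_s^\top)_{ij}(t_{k+1}-t_k)$ plus a conditionally-centered fluctuation, together with the $L^2$ control of that fluctuation via orthogonality of martingale increments, is the technical heart. The localization step is a routine but unavoidable bookkeeping burden (needed because $f$ and its derivatives are a priori unbounded), and once both are in hand the first-order term and the Taylor remainder are standard.
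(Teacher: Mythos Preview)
Your proof sketch is the standard derivation of the It\^o formula via Taylor expansion along a partition, localization, and the $L^2$ control of the quadratic-variation sums; it is correct as an outline. Note, however, that the paper does not actually prove this lemma: it is stated as a classical preliminary with a citation to It\^o's original work \cite{ito1951formula} and is simply quoted for later use. So there is no ``paper's own proof'' to compare against---the paper treats it as a black box, whereas you have supplied the textbook argument.
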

\bigskip

\subsection{Other preliminaries}\label{preliminaries}

We will use the following deterministic eigenvalue perturbation bound
\begin{lemma}[\bf Weyl's Inequality \cite{weyl1912asymptotische}]\label{lemma_weyl} Let  $A, E \in \mathbb{R}^{m \times d}$ is a  matrix.
Denote by $\sigma_1 \geq \ldots \geq \sigma_d$ the singular values of $A$ and by $\hat{\sigma}_1 \geq \ldots \geq \hat{\sigma}_d$ the singular values of $A+ E$.
Then
    \begin{equation*}
  |\sigma_i - \hat{\sigma}_i | \le \| E \|_2 \qquad \forall i \in [d].
    \end{equation*}
\end{lemma}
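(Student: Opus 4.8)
The plan is to derive the singular-value form of Weyl's inequality from the Courant--Fischer (min--max) variational characterization of singular values together with subadditivity of the spectral norm. Recall that for any $M \in \mathbb{R}^{m \times d}$ with $d \le m$ and singular values $\sigma_1(M) \ge \cdots \ge \sigma_d(M)$, one has
\[
\sigma_i(M) = \min_{\substack{S \subseteq \mathbb{R}^d \\ \dim S = d-i+1}} \ \max_{\substack{x \in S \\ \|x\|_2 = 1}} \|Mx\|_2, \qquad i \in [d].
\]
This follows by applying the classical Courant--Fischer theorem to the symmetric positive semidefinite matrix $M^\top M$, whose eigenvalues are $\sigma_i(M)^2$, and taking square roots (the map $t \mapsto \sqrt{t}$ being monotone on $[0,\infty)$). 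First I would record this characterization as the only nontrivial input.

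Next, fix $i \in [d]$ and an arbitrary subspace $S \subseteq \mathbb{R}^d$ of dimension $d-i+1$. For every unit vector $x \in S$, the triangle inequality gives $\|(A+E)x\|_2 \le \|Ax\|_2 + \|Ex\|_2 \le \|Ax\|_2 + \|E\|_2$, where $\|Ex\|_2 \le \|E\|_2 \|x\|_2 = \|E\|_2$ by definition of the spectral norm. Taking the maximum over unit vectors $x \in S$ yields
\[
\max_{x \in S,\, \|x\|_2=1} \|(A+E)x\|_2 \ \le\ \max_{x \in S,\, \|x\|_2=1} \|Ax\|_2 + \|E\|_2 .
\]
Minimizing both sides over all $(d-i+1)$-dimensional subspaces $S$ and invoking the min--max formula for the singular values of both $A+E$ and $A$ gives $\hat{\sigma}_i \le \sigma_i + \|E\|_2$.

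Finally, I would obtain the reverse bound by symmetry: writing $A = (A+E) + (-E)$ and repeating the previous step with the roles of $A$ and $A+E$ interchanged, together with $\|-E\|_2 = \|E\|_2$, gives $\sigma_i \le \hat{\sigma}_i + \|E\|_2$. Combining the two inequalities yields $|\sigma_i - \hat{\sigma}_i| \le \|E\|_2$ for every $i \in [d]$, as claimed. An alternative route, which I would mention but not carry out, is to pass to the Jordan--Wielandt dilation $\tilde M := \bigl(\begin{smallmatrix} 0 & M \\ M^\top & 0 \end{smallmatrix}\bigr)$, whose eigenvalues are $\{\pm\sigma_i(M)\}$ together with zeros, and to apply the classical Weyl inequality for eigenvalues of symmetric matrices, using $\|\tilde E\|_2 = \|E\|_2$. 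There is no substantial obstacle here; the only points requiring care are applying the variational characterization with the correct subspace dimension and running the argument symmetrically in $A$ and $A+E$ so that the two-sided bound (with an absolute value), rather than a one-sided estimate, is obtained.
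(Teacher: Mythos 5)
Your proof is correct and is the standard argument: the Courant--Fischer min--max characterization of singular values (via $M^\top M$), the triangle inequality to get $\hat\sigma_i \le \sigma_i + \|E\|_2$, and the symmetric swap $A \leftrightarrow A+E$ to obtain the two-sided bound. The paper itself states this lemma as a classical result, citing \cite{weyl1912asymptotische}, and supplies no proof, so there is nothing internal to compare against; your derivation (and the alternative via the Jordan--Wielandt dilation you mention) is exactly what one would expect to find in a reference.
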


The following concentration bound, Theorem 4.4.5 of  \cite{vershynin2018high} applied  to Gaussian random matrices, will allow us to bound the spectral norm of the Gaussian perturbation $G$ (which in turn will allow us to apply \eqref{lemma_weyl} to bound the perturbations to eigenvalues).
\begin{lemma}[\bf Spectral-norm concentration bound for Gaussian matrices \cite{vershynin2018high}]\label{lemma_concentration}
If $G \in \mathbb{R}^{m \times d}$ is a Gaussian random matrix with iid $N(0,1)$ entries, then
$$\mathbb{P}(\|G\|_2 > \sqrt{m} + \sqrt{d} + s) < 2e^{-s^2} \qquad \forall s >0.$$
\end{lemma}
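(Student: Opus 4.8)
The plan is to prove the tail bound by the standard two-step recipe for suprema of Gaussian processes: first show that $\|G\|_2$ concentrates sharply around its expectation, and then bound that expectation by $\sqrt m + \sqrt d$. Identify $\mathbb{R}^{m\times d}$ with $\mathbb{R}^{md}$ equipped with the Frobenius (Euclidean) inner product, and write the operator norm as a supremum of a centered Gaussian process,
\[
\|G\|_2 = \sup_{x\in S^{d-1},\,y\in S^{m-1}} y^\top G x = \sup_{x\in S^{d-1},\,y\in S^{m-1}} \langle G,\, yx^\top\rangle ,
\]
where $S^{n-1}$ denotes the unit sphere in $\mathbb{R}^n$.

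For the concentration step I would use Gaussian Lipschitz concentration (the Borell--TIS inequality). The map $G\mapsto\|G\|_2$ is $1$-Lipschitz with respect to the Frobenius norm, since $\bigl|\,\|G\|_2-\|H\|_2\,\bigr|\le\|G-H\|_2\le\|G-H\|_F$; applying Borell--TIS to this function of the standard Gaussian vector $\mathrm{vec}(G)\in\mathbb{R}^{md}$ gives $\mathbb{P}\bigl(\|G\|_2>\mathbb{E}\|G\|_2+s\bigr)\le e^{-s^2/2}$ for every $s>0$. It therefore suffices to establish $\mathbb{E}\|G\|_2\le\sqrt m+\sqrt d$, after which a tail bound of the stated form follows (the precise constant $2e^{-s^2}$ being Theorem~4.4.5 of \cite{vershynin2018high} applied to $G$, whose rows are i.i.d. isotropic vectors with an absolute sub-gaussian constant; the Gordon--Borell argument below in fact gives the sharper exponent $e^{-s^2/2}$).

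For the expectation bound I would invoke the Sudakov--Fernique (Gordon) comparison inequality. Put $X_{x,y}:=y^\top G x$ and introduce the comparison process $Y_{x,y}:=\langle g,x\rangle+\langle h,y\rangle$, where $g\in\mathbb{R}^d$ and $h\in\mathbb{R}^m$ are independent standard Gaussian vectors. A direct computation gives, for $(x,y),(x',y')\in S^{d-1}\times S^{m-1}$,
\[
\mathbb{E}(X_{x,y}-X_{x',y'})^2 = \|yx^\top - y'x'^\top\|_F^2 = 2 - 2\langle x,x'\rangle\langle y,y'\rangle,
\]
while $\mathbb{E}(Y_{x,y}-Y_{x',y'})^2 = \|x-x'\|^2 + \|y-y'\|^2 = 4 - 2\langle x,x'\rangle - 2\langle y,y'\rangle$, and the difference of the two right-hand sides equals $2\bigl(1-\langle x,x'\rangle\bigr)\bigl(1-\langle y,y'\rangle\bigr)\ge 0$ since $\langle x,x'\rangle,\langle y,y'\rangle\le 1$ on the unit spheres. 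Hence the increments of $X$ are dominated by those of $Y$, so Sudakov--Fernique yields $\mathbb{E}\|G\|_2=\mathbb{E}\sup_{x,y}X_{x,y}\le\mathbb{E}\sup_{x,y}Y_{x,y}=\mathbb{E}\|g\|_2+\mathbb{E}\|h\|_2\le\sqrt d+\sqrt m$, where the last step is Jensen's inequality ($\mathbb{E}\|g\|_2\le(\mathbb{E}\|g\|_2^2)^{1/2}=\sqrt d$, and similarly for $h$). Combining with the concentration step gives $\mathbb{P}\bigl(\|G\|_2>\sqrt m+\sqrt d+s\bigr)\le\mathbb{P}\bigl(\|G\|_2>\mathbb{E}\|G\|_2+s\bigr)\le e^{-s^2/2}$, which implies the claim.

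I expect the main obstacle to be obtaining the sharp leading constant $\sqrt m+\sqrt d$ in the expectation bound: the elementary alternative — an $\varepsilon$-net argument bounding $\|G\|_2$ by a constant multiple of a maximum over nets of $S^{d-1}$ and $S^{m-1}$ of the $N(0,1)$ variables $y^\top G x$, followed by a union bound over the $\le 9^{\,d+m}$ net points — only delivers $\|G\|_2\le C(\sqrt m+\sqrt d)+s$ with high probability, so the constant $1$ genuinely requires a Gaussian comparison theorem. Within that step the one computation needing care is verifying the increment hypothesis of Sudakov--Fernique, which, as above, reduces to the inequality $(1-\langle x,x'\rangle)(1-\langle y,y'\rangle)\ge 0$. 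The concentration half is routine once one notices that $\|\cdot\|_2\le\|\cdot\|_F$ makes the operator norm $1$-Lipschitz.
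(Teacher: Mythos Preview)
Your proof is correct. Note, however, that the paper does not prove this lemma at all: it is stated as a preliminary and simply cited as ``Theorem~4.4.5 of \cite{vershynin2018high} applied to Gaussian random matrices.'' So there is no proof in the paper to compare against; you have supplied the standard Gaussian-specific argument (Sudakov--Fernique for $\mathbb{E}\|G\|_2\le\sqrt m+\sqrt d$, then Borell--TIS for the tail), which in fact yields the sharper one-sided bound $e^{-s^2/2}$ rather than the $2e^{-s^2}$ that the general sub-gaussian statement in \cite{vershynin2018high} gives.
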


\section{Overview of proof of Theorem \ref{ThCovariance}}
\label{4section:4}

We present an overview of the proof of Theorem \ref{ThCovariance} along with the main technical lemmas used in the proof.
In Steps 1 and 2 we express the perturbed matrix, and its quantities of interest derived from its right-singular vectors, as matrix-valued diffusions.
Steps 3, 4, and 5 present the main technical lemmas, and we complete the proof in Step 6. 
The full proof is given in Appendix \ref{main-proof}.

\subsection{Viewing the perturbed matrix as a matrix-valued Brownian motion.} 
To obtain our bounds, we begin by defining  the matrix-valued Brownian motion, $$\Phi(t) := A + B(t) \qquad \qquad \forall t \geq 0,$$ where the entries of $B(t)$ evolve as independent standard Brownian motions initialized at $0$.
In particular, at time $t=0$ we have $\Phi(0) = A$, and at time $t=T$ we have $\Phi(T) = A + \sqrt{T} G$ where $G$ is an  $m \times d$ Gaussian random matrix with iid $N(0,1)$ entries.

\subsection{Projecting the matrix Brownian motion onto the orthogonal orbit $\mathcal{O}_{\Gamma^2}$. }
        Denote by $A = U\Sigma V^\top$ and $\hat A = \hat U\hat \Sigma\hat V^\top$ singular value decompositions of $A$ and $\hat A$, respectively, where $U, \hat{U} \in \mathrm{O}(m)$, $V, \hat{V} \in \mathrm{O}(d)$, and $\Sigma, \hat{\Sigma} \in \mathbb{R}^{m \times d}$ are diagonal.
        
 	Recall that our goal is to bound the quantity $\mathbb{E}[\| \hat{V}\Gamma^\top \Gamma\hat{V}^\top - V\Gamma^\top\Gamma V^\top \|_F]$, where $A^\top A = V \Sigma^\top\Sigma V^\top$ and $\hat{A}^\top \hat{A} = \hat{V} \hat{\Sigma}^\top\hat{\Sigma} \hat{V}^\top$ are eigenvalue decompositions of $A^\top A $ and $\hat{A}^\top \hat{A} $. 
     To obtain a bound on this quantity,
     we first define a stochastic process $\Psi(t)$ for which  $\Psi(0) = V \Gamma^\top \Gamma V^\top$ and $\Psi(T) = \hat{V} \Gamma^\top \Gamma \hat{V}^\top$. 
     We then bound the expected Frobenius distance $$\mathbb{E}[\| \hat{V}\Gamma^\top \Gamma\hat{V}^\top - V\Gamma^\top\Gamma V^\top \|_F] = \mathbb{E}[\| \Psi(T) - \Psi(0) \|_F]$$ by integrating the stochastic derivative of $\Psi(t)$ over the time period $[0, T]$.
	
	Towards this end, at every time $t \geq 0$, define $\Phi(t) := U(t)\Sigma (t) V(t)^\top$ to be a singular value decomposition of the rectangular matrix $\Phi(t)$, where $\Sigma(t) \in \mathbb{R}^{m \times d}$ is a diagonal matrix whose diagonal entries are the singular values $\sigma_1(t) \geq \cdots \ge \sigma_d(t)$ of $\Phi(t)$.
 $V(t) = [v_1(t), \cdots, v_d(t)]$ is a $d\times d$ orthogonal matrix whose columns $v_1(t), \cdots, v_d(t)$ are the corresponding right-singular vectors of $\Phi(t)$. 
 $V(t) \in \mathrm{O}(m)$ is an $m\times m$ orthogonal matrix whose columns are left-singular vectors of $\Phi(t)$.
    
     At every time, denote by $\Psi(t) \in \mathcal{O}_{\Gamma^2}$ to be the symmetric matrix with given eigen values $\Gamma(t)^\top \Gamma(t)$ and eigenvectors given by the columns of $V(t)$: $$\Psi(t) :=V(t)\Gamma(t)^\top \Gamma(t) V(t)^\top, \forall t \in [0,T].$$
       In other words, $\Psi(t) \in \mathcal{O}_{\Gamma^2}$ is the Frobenius-distance minimizing projection of the matrix Brownian motion $\Phi(t)$ onto the orthogonal orbit manifold $\mathcal{O}_{\Gamma^2}$.
	
\subsection{Deriving an expression for the stochastic derivative $\mathrm{d}\Psi(t)$.}
	To bound the expected squared Frobenius distance $\mathbb{E}\left[\|\Psi(T) - \Psi(0)\|_F^2\right]$ we would like to express it as an integral in terms of the stochastic derivative of $\Phi(t)$.
 
Towards this end, we use the stochastic differential equations which govern the evolution of the eigenvectors of the Dyson-Bessel process \eqref{eq:sde_singular_v} to derive an expression for the stochastic derivative $\mathrm{d}\Psi(t)$ of the matrix diffusion $\Psi(t)$ (Lemma \ref{sdeofpsi}),
    \begin{align}\label{eq_SD_1}
    \mathrm{d}\Psi(t) &=\sum_{i=1}^d \gamma_i^2 \mathrm{d}(v_i (t) v_i^\top (t))\nonumber\\
    &\sum_{i=1}^d \sum_{j \neq i} (\gamma_i^2 - \gamma_j^2 ) \left[ \dfrac{c_{ij} (t) }{2}\mathrm{d}\beta_{ji} (t) ( v_i (t)  v_j^\top (t) + v_j (t) v_i^\top (t) )-  c_{ij}^2 (t)\mathrm{d}t    (v_i (t) v_i^\top (t))\right].
    \end{align}
		
\subsection{Using Gaussian concentration and Weyl's inequality to bound the singular value gaps.}
	The above equation \eqref{eq_SD_1} for the stochastic derivative $\mathrm{d}\Psi(t)$ includes terms $c_{ij}(t)$, whose magnitude is proportional to the inverse of the gaps in the squared singular values $\sigma_i^2(t) - \sigma_j^2(t)$ for each $i, j \in[d]$. 
 In order to bound these terms, we use Weyl's inequality \eqref{lemma_weyl} together with standard concentration bounds for the spectral norm of Gaussian random matrices (Lemma \ref{lemma_concentration}), to show that the gaps $\sigma_i(t) - \sigma_{j}(t)$ in the top $k+1$ singular values  satisfy (Lemma \ref{boundofgap}),
	$$\sigma_i (t) - \sigma_j(t) \ge \dfrac{1}{2}(\sigma_i - \sigma_j) \qquad \forall t \in [0,T], \, \, i<j\leq k+1$$
	with high probability at least $1-\delta$, provided that the initial gaps are sufficiently large to satisfy  Assumption \ref{assumption}($A, k, T, \sigma, \gamma$) .

 This implies that, with high probability at least $1-\delta$, the inverse-eigenvalue gap terms in \eqref{eq_SD_1} satisfy (Lemma \ref{boundofc})
\begin{align}\label{eq_boundofc_2}
    c_{ij} (t) &= \sqrt{\frac{(\sigma_j(t))^2 + (\sigma_i(t))^2}{((\sigma_j(t))^2 - (\sigma_i(t))^2)^2}} \nonumber\\
    &\leq \dfrac{4}{\sigma_i - \sigma_j},\qquad \forall i<j,  \, \, t \in [0,T]. 
\end{align}
 
\subsection{Integrating the stochastic derivative of d$\Psi(t)$ over the time interval $[0,T]$.}
	Next we express the expected squared Frobenius distance $\mathbb{E}\left[\|\Psi(T) - \Psi(0)\|_F^2\right]  $ as an integral $\mathbb{E}\left[\|\Psi(T) - \Psi(0)\|_F^2\right]  =\mathbb{E}\left[ \|\int_0^T \mathrm{d}\Psi(t)\|_F^2 \right] $.
 
 Next, we apply Ito's Lemma (Lemma \ref{Lemma_Ito}) to  $f(\Psi(t))$ where $f(X) := \| \cdot \|_F^2$, and plug in our high-probability bound on the inverse eigenvalue gap terms $c_{ij}(t)$ \eqref{eq_boundofc_2}, to derive an upper bound for the integral $\mathbb{E}\left[ \|\int_0^T \mathrm{d}\Psi(t)\|_F^2 \right]$, which gives roughly (Lemma \ref{boundofz})
	 \begin{align}\label{eq:z}
	      &\mathbb{E}\left[\|\Psi(T) - \Psi(0)\|_F^2\right]   \nonumber \\
     &\leq \int_0^T \mathbb{E}\left[ \sum_{i=1}^d \sum_{j \neq i}   (\gamma_i^2 - \gamma_j^2 )^2c_{ij}^2(t) \right] \mathrm{d}t + T \int_0^T \mathbb{E}\left[ \sum_{i=1}^d \left(\sum_{j\neq i} (\gamma_i^2 - \gamma_j^2 )^2c_{ij}^2(t)\right)^2\right]\mathrm{d}t \nonumber\\        
     &\le \int_0^T \mathbb{E}\left[ \sum_{i=1}^d \sum_{j \neq i}   \dfrac{(\gamma_i^2 - \gamma_j^2 )^2}{(\sigma_i - \sigma_j)^2} \right] \mathrm{d}t + T \int_0^T \mathbb{E}\left[ \sum_{i=1}^d \left(\sum_{j\neq i} \dfrac{(\gamma_i^2 - \gamma_j^2 ) }{(\sigma_i - \sigma_j)^2}\right)^2\right]\mathrm{d}t.  
\end{align}

	 Noting that the second term on the right-hand side of (\ref{eq:z})is at least as small as the first term, and applying the Cauchy-Schwarz inequality to the second term, we get that (Theorem \ref{ThCovariance}),
  \begin{align*} 
   \mathbb{E} \left[\| \hat{V}\Gamma^\top \Gamma\hat{V}^\top - V\Gamma^\top\Gamma V^\top \|_F^2\right] &= \mathbb{E}\left[\|\Psi(T) - \Psi(0)\|_F^2\right]\\
   &\le O\left(\sum_{i=1}^k \sum_{j=i+1}^d \dfrac{(\gamma_i^2 - \gamma_j^2)^2}{(\sigma_i - \sigma_j)^2}\right)T.
\end{align*}

\section{Conclusion}

  In this paper, we obtain Frobenius-norm bounds on the perturbation to the singular subspace spanned by the top-$k$ singular vectors of a matrix $A \in \mathbb{R}^{m\times d}$, when $A$ is perturbed by an $m\times d$ Gaussian random matrix. 
  Our bounds improve, in many settings where the perturbation is Gaussian, on bounds implied by previous works, by a factor of roughly $\frac{\sqrt{m}}{\sqrt{d}} \sqrt{k}$.
  This may lead to a large improvement in many applications, as one oftentimes has that the number $m$  of rows in the data matrix (corresponding to the number of datapoints) is much larger than the number of columns  $d$  (which oftentimes correspond to different features in the data).
To obtain our bounds we view use tools from stochastic calculus to track the evolution of the subspace spanned by the top-$k$ singular vectors.

 On the other hand, we note that our bounds assume that the top-$k$ singular value gaps of $A$ are roughly $\Omega(\sqrt{m})$; while this assumption may hold in settings where the data matrix has fast-decaying singular values, it would be interesting to see if it is possible to relax this assumption.
 Moreover, we note that our bounds only apply in the special case when the perturbation $G$ is Gaussian, and it would be interesting to see whether our bounds can be extended to other random matrix distributions.

\newpage

\tableofcontents

\newpage

\appendix

\section{Proof Theorem \ref{ThCovariance}}\label{sec_full_proofs} 
\subsection{Proof of Lemma \ref{sdeofv}}

We first decomposite the matrix $\Psi(t)$ as a sum of its right-singular vectors: $\Psi(t) = \sum_{i=1}^d \gamma_i^2 (v_i (t) v_i^\top (t))$. 
Thus we have 
\begin{equation}
    \mathrm{d}\Psi(t) = \sum_{i=1}^d \gamma_i^2 \mathrm{d}(v_i (t) v_i^\top (t))
\end{equation}
We begin by computing the stochastic derivative $\mathrm{d}v_i (t) v_i^\top (t)$ for each $i\in [d]$, by applying the formula in (\ref{eq:sde_singular_v}), together with Ito's Lemma (Lemma \ref{Lemma_Ito}).

\begin{lemma}[Stochastic derivative of $v_i(t)v_i(t)^\top$]\label{sdeofv}
    For all $t\in[0,T]$, 
    $$ \mathrm{d}\left(v_i (t)v_i^\top (t)\right) = \sum_{j \neq i} v_j (t) c_{ij} (t) \mathrm{d}\beta_{ji}(t) - \frac{1}{2} v_i (t) \sum_{j \neq i} c_{ij}^2 (t) \mathrm{d}t. $$
\end{lemma}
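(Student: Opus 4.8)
The plan is to obtain $\mathrm{d}\big(v_i(t)v_i^\top(t)\big)$ by applying Ito's Lemma (Lemma~\ref{Lemma_Ito}) to the rank-one matrix built from the singular-vector diffusion $v_i(t)$, whose SDE \eqref{eq:sde_singular_v} is already in hand. Concretely, for each pair of coordinates $(a,b)$ I would apply Lemma~\ref{Lemma_Ito} to the quadratic function $f(x) = x_a x_b$ along $X(t) = v_i(t)$; assembling the resulting scalar identities into a matrix gives the Ito product rule
$$\mathrm{d}\big(v_i(t)v_i^\top(t)\big) = (\mathrm{d}v_i(t))\,v_i^\top(t) + v_i(t)\,(\mathrm{d}v_i(t))^\top + (\mathrm{d}v_i(t))(\mathrm{d}v_i(t))^\top,$$
where the final term is the matrix whose $(a,b)$ entry is the quadratic covariation of $(v_i)_a$ and $(v_i)_b$.

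First I would substitute \eqref{eq:sde_singular_v} into the first two terms, writing the martingale part of $\mathrm{d}v_i$ as $\sum_{j\neq i} c_{ij}(t)\,v_j(t)\,\mathrm{d}\beta_{ji}(t)$ and the drift part as $-\tfrac12 v_i(t)\sum_{j\neq i} c_{ij}^2(t)\,\mathrm{d}t$. The two rank-one terms then contribute a martingale piece $\sum_{j\neq i} c_{ij}(v_jv_i^\top + v_iv_j^\top)\,\mathrm{d}\beta_{ji}$ together with a drift piece $-v_iv_i^\top \sum_{j\neq i} c_{ij}^2\,\mathrm{d}t$ (two copies of $-\tfrac12$). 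Next I would compute the covariation term using that, for fixed $i$, the processes $\{\beta_{ji}\}_{j\neq i}$ are independent standard one-dimensional Brownian motions (the skew-symmetry $\beta_{ij}=-\beta_{ji}$ affects only signs, not covariations), so $\mathrm{d}\beta_{ji}\,\mathrm{d}\beta_{j'i} = \delta_{jj'}\,\mathrm{d}t$ while $\mathrm{d}\beta_{ji}\,\mathrm{d}t = (\mathrm{d}t)^2 = 0$; this yields $(\mathrm{d}v_i)(\mathrm{d}v_i)^\top = \sum_{j\neq i} c_{ij}^2\, v_jv_j^\top\,\mathrm{d}t$. Collecting the three pieces gives the matrix-valued stochastic derivative of $v_iv_i^\top$ asserted in the lemma, which is exactly the form that will be summed with weights $\gamma_i^2$ (and symmetrized via the relabeling $i\leftrightarrow j$) to produce \eqref{eq_SD_1}.

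The step I expect to be the main obstacle is the quadratic-covariation term: it is precisely where stochastic calculus departs from the deterministic chain rule, and getting it right requires correctly identifying the independence structure of the driving Brownian motions $\beta_{ji}$ and carefully tracking which parts of each term are martingale versus finite-variation. Everything else is routine algebraic bookkeeping, and in particular no cross terms between the SDEs of distinct singular vectors $v_i, v_{i'}$ enter, since $v_iv_i^\top$ is a function of $v_i$ alone.
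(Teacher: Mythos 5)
Your proposal is correct and takes essentially the same approach as the paper's own proof: both expand $\mathrm{d}\big(v_i(t)v_i^\top(t)\big)$ via the Ito product rule, substitute \eqref{eq:sde_singular_v}, and evaluate the quadratic covariation $(\mathrm{d}v_i)(\mathrm{d}v_i)^\top = \sum_{j\neq i} c_{ij}^2(t)\,v_j(t)v_j^\top(t)\,\mathrm{d}t$ using the independence of the driving Brownian motions $\beta_{ji}$, arriving at $\mathrm{d}(v_iv_i^\top) = \sum_{j\neq i} c_{ij}\,\mathrm{d}\beta_{ji}\,(v_iv_j^\top + v_jv_i^\top) - \sum_{j\neq i} c_{ij}^2\,(v_iv_i^\top - v_jv_j^\top)\,\mathrm{d}t$. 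One remark: the display in the statement of Lemma~\ref{sdeofv} as printed appears to be a misprint (it reproduces the vector SDE for $v_i$ rather than the rank-one matrix identity), but the formula you and the paper's proof both derive is the one actually invoked in Lemma~\ref{sdeofpsi}.
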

\begin{proof} The dynamic of right-singular vectors \cite{bru1989diffusions} are the following:
\begin{align*}
    \mathrm{d} v_i(t) & = \sum_{j \neq i} v_j (t) \sqrt{\dfrac{\lambda_{j}(t) + \lambda_{i}(t)}{(\lambda_{j}(t) - \lambda_{i}(t))^2}} \mathrm{d}\beta_{ji}(t) - \frac{1}{2} v_i (t) \sum_{j \neq i} \dfrac{\lambda_{j}(t) + \lambda_{i}(t)}{(\lambda_{j}(t) - \lambda_{i}(t))^2} \mathrm{d}t\nonumber\\
    & = \sum_{j \neq i} v_j (t) c_{ij} (t) \mathrm{d}\beta_{ji}(t) - \frac{1}{2} v_i (t) \sum_{j \neq i} c_{ij}^2 (t) \mathrm{d}t. \label{eq:sde_singular_v}
\end{align*}
Thus, we have
\begin{align}
    & \mathrm{d} \left(v_i (t)v_i^\top (t)\right)  = \left(v_i (t) + \mathrm{d}v_i (t)\right)\left(v_i (t) + \mathrm{d}v_i (t)\right)^\top-v_i (t) v_i^\top (t)
    \nonumber\\
    & = \left(v_i(t)  + \sum_{j \neq i} v_j(t) c_{ij}(t) \mathrm{d}\beta_{ji}(t) - \frac{1}{2} v_i(t) \sum_{j \neq i} c_{ij}^2  \mathrm{d}t\right)\nonumber\\
    & \qquad \qquad \times \left(v_i(t)^\top + \sum_{j \neq i} v_j(t)^\top  c_{ij}(t) \mathrm{d}\beta_{ji}(t) - \frac{1}{2} v_i(t)^\top \sum_{j \neq i} c_{ij}^2(t) \mathrm{d}t\right)- v_i(t)  v_i(t)^\top  \nonumber\\
    & = v_i (t) \left(\sum_{j \neq i} v_j^\top (t) c_{ij} (t) \mathrm{d}\beta_{ji}(t)\right) - \frac{1}{2}v_i (t) v_i^\top (t) \sum_{j \neq i} c_{ij}^2 (t) \mathrm{d}t + \left(\sum_{j \neq i} v_j (t) c_{ij} (t) \mathrm{d}\beta_{ji}(t)\right)v_i^\top (t)  \nonumber\\
    & + \left(\sum_{j \neq i} v_j (t) c_{ij} (t) \mathrm{d}\beta_{ji}(t)\right)\left(\sum_{j \neq i} v_j^\top (t) c_{ij} (t) \mathrm{d}\beta_{ji}(t)\right) - \frac{1}{2}v_i (t) v_i^\top (t) \sum_{j \neq i} c_{ij}^2 (t) \mathrm{d}t + o(\mathrm{d}t)  \nonumber\\
    &  = v_i (t) \left(\sum_{j \neq i} v_j^\top (t) c_{ij} (t) \mathrm{d}\beta_{ji}(t)\right) + \left(\sum_{j \neq i} v_j (t) c_{ij} (t) \mathrm{d}\beta_{ji}(t)\right)v_i^\top (t) - v_i (t) v_i^\top (t) \sum_{j \neq i} c_{ij}^2 (t) \mathrm{d}t \nonumber\\
    & + \sum_{k\neq i} \sum_{j\neq i} v_k (t) v_j^\top (t)c_{ik} (t) c_{ij} (t) \mathrm{d}\beta_{ki}(t) \mathrm{d}\beta_{ji}(t)                                  \nonumber\\
    &  = v_i (t) \left(\sum_{j \neq i} v_j^\top (t) c_{ij} (t) \mathrm{d}\beta_{ji}(t)\right) + \left(\sum_{j \neq i} v_j (t) c_{ij} (t) \mathrm{d}\beta_{ji}(t)\right)v_i^\top (t) - v_i (t) v_i^\top (t) \sum_{j \neq i} c_{ij}^2 (t) \mathrm{d}t \nonumber\\
   & + \sum_{k\neq i} \sum_{j\neq i} v_k (t) v_j^\top (t)c_{ik} (t) c_{ij} (t) \mathbbm{1}_{\{(kj)= (ii)\}}\mathrm{d}t                                 \nonumber\\
    & = \sum_{j \neq i} c_{ij} (t)  \mathrm{d}\beta_{ji} (t) ( v_i (t) v_j^\top (t) + v_j (t) v_i^\top (t) ) - \sum_{j\neq i}  c_{ij}^2 (t)\mathrm{d}t ( v_i (t) v_i^\top (t) - v_j (t) v_j^\top (t) ). \nonumber
\end{align}
\end{proof}

\subsection{Proof of Lemma \ref{sdeofpsi}}
Recall that 
\begin{equation*}
    \Psi(t) = \sum_{i=1}^d \gamma_i^2 (v_i (t) v_i^\top (t)).
\end{equation*}

We now apply Lemma \ref{sdeofv} to compute the stochastic derivative of $\Psi(t)$.

\begin{lemma}[Stochastic derivative of $\Psi(t)$]\label{sdeofpsi}
     For all $t\in[0,T]$, we have that 
     $$ \mathrm{d}\Psi(t) = \sum_{i=1}^d \sum_{j \neq i} (\gamma_i^2 - \gamma_j^2 ) \left[ \dfrac{c_{ij} (t) }{2}\mathrm{d}\beta_{ji} (t) ( v_i (t)  v_j^\top (t) + v_j (t) v_i^\top (t) )-  c_{ij}^2 (t)\mathrm{d}t    (v_i (t) v_i^\top (t))\right].  $$
\end{lemma}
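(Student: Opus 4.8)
The plan is to derive $\mathrm{d}\Psi(t)$ by linearity from the per-mode stochastic derivatives already computed in Lemma \ref{sdeofv}, then collect terms. Concretely, since $\Psi(t)=\sum_{i=1}^d \gamma_i^2\, v_i(t)v_i^\top(t)$, we have $\mathrm{d}\Psi(t)=\sum_{i=1}^d \gamma_i^2\, \mathrm{d}\!\left(v_i(t)v_i^\top(t)\right)$, and Lemma \ref{sdeofv} (more precisely the last line of its proof) gives
\begin{equation*}
\mathrm{d}\!\left(v_i(t)v_i^\top(t)\right) = \sum_{j\neq i} c_{ij}(t)\,\mathrm{d}\beta_{ji}(t)\big(v_i(t)v_j^\top(t)+v_j(t)v_i^\top(t)\big) - \sum_{j\neq i} c_{ij}^2(t)\,\mathrm{d}t\,\big(v_i(t)v_i^\top(t)-v_j(t)v_j^\top(t)\big).
\end{equation*}
So the first step is simply to multiply this by $\gamma_i^2$ and sum over $i$.

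The key manipulation is a symmetrization / index-swap in the resulting double sums. For the martingale (Brownian) part, the term indexed by $(i,j)$ carries the factor $\gamma_i^2$ times $\mathrm{d}\beta_{ji}(t)(v_iv_j^\top+v_jv_i^\top)$; swapping the roles of $i$ and $j$ and using that $c_{ij}=c_{ji}$, that $v_iv_j^\top+v_jv_i^\top$ is symmetric in $i\leftrightarrow j$, and that $\mathrm{d}\beta_{ij}=-\mathrm{d}\beta_{ji}$, the $(j,i)$ term contributes $\gamma_j^2$ times $-\mathrm{d}\beta_{ji}(t)(v_iv_j^\top+v_jv_i^\top)$. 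Averaging the two representations of the same sum yields a combined coefficient $\tfrac{1}{2}(\gamma_i^2-\gamma_j^2)$ on $c_{ij}(t)\,\mathrm{d}\beta_{ji}(t)(v_iv_j^\top+v_jv_i^\top)$, which is exactly the first bracketed term in the claim. For the finite-variation ($\mathrm{d}t$) part, I would split $\sum_i\gamma_i^2\sum_{j\neq i}c_{ij}^2(v_iv_i^\top-v_jv_j^\top)$ into $\sum_{i}\sum_{j\neq i}\gamma_i^2 c_{ij}^2\, v_iv_i^\top$ minus $\sum_i\sum_{j\neq i}\gamma_i^2 c_{ij}^2\, v_jv_j^\top$, then relabel $i\leftrightarrow j$ in the second double sum (again using $c_{ij}=c_{ji}$) so it becomes $\sum_i\sum_{j\neq i}\gamma_j^2 c_{ij}^2\, v_iv_i^\top$; subtracting gives $\sum_i\sum_{j\neq i}(\gamma_i^2-\gamma_j^2)c_{ij}^2(t)\,\mathrm{d}t\, v_i(t)v_i^\top(t)$, matching the second bracketed term (with the stated minus sign).

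I do not expect any serious obstacle here — this is a bookkeeping lemma, and the only thing to be careful about is consistent use of the skew-symmetry $\mathrm{d}\beta_{ij}=-\mathrm{d}\beta_{ji}$ together with $c_{ij}=c_{ji}$ when performing the index swaps, and making sure the $\gamma^2$ weights track correctly through each relabeling. One minor subtlety worth a sentence in the writeup: the diagonal $i=j$ terms are absent throughout (since $c_{ii}$ is not defined and $\gamma_i^2-\gamma_i^2=0$ anyway), so extending or restricting the inner sum over $j\neq i$ is harmless. After the two symmetrizations the two displayed sums combine into the single expression
\begin{equation*}
\mathrm{d}\Psi(t) = \sum_{i=1}^d\sum_{j\neq i}(\gamma_i^2-\gamma_j^2)\left[\frac{c_{ij}(t)}{2}\,\mathrm{d}\beta_{ji}(t)\big(v_i(t)v_j^\top(t)+v_j(t)v_i^\top(t)\big) - c_{ij}^2(t)\,\mathrm{d}t\,\big(v_i(t)v_i^\top(t)\big)\right],
\end{equation*}
which is the claim.
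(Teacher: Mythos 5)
Your argument is correct and takes the same route as the paper's: apply linearity of the stochastic derivative to $\Psi(t)=\sum_i\gamma_i^2 v_iv_i^\top$, substitute the matrix-valued identity from the end of the proof of Lemma \ref{sdeofv}, and then symmetrize both the martingale and the drift double-sums using $c_{ij}=c_{ji}$ together with $\mathrm{d}\beta_{ij}=-\mathrm{d}\beta_{ji}$ to extract the $(\gamma_i^2-\gamma_j^2)$ weights. You also correctly flag, and work around, the fact that the displayed statement of Lemma \ref{sdeofv} is mis-typeset (it looks vector-valued), so the correct input is the final line of that lemma's proof.
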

\begin{proof}   
\begin{align}
	\mathrm{d}\Psi(t) &= \sum_{i=1}^d \gamma_i^2 \mathrm{d}(v_i (t) v_i^\top (t))\nonumber\\
	=& \sum_{i=1}^d \gamma_i^2 \left( \sum_{j \neq i} c_{ij} (t)  \mathrm{d}\beta_{ji} (t) ( v_i (t) v_j^\top (t) + v_j (t) v_i^\top (t) ) - \sum_{j\neq i}  c_{ij}^2 (t)\mathrm{d}t ( v_i (t) v_i^\top (t) - v_j (t) v_j^\top (t) ) \right)\nonumber\\
    =& \sum_{i=1}^d \sum_{j \neq i} \gamma_i^2  c_{ij} (t) \mathrm{d}\beta_{ji} (t) (  v_j (t) v_i^\top (t) + v_i (t) v_j^\top (t) ) - \sum_{i=1}^d \sum_{j\neq i}  \gamma_i^2 c_{ij}^2 (t)\mathrm{d}t ( v_i (t) v_i^\top (t) - v_j (t) v_j^\top (t) )  \nonumber\\
    =& \dfrac{1}{2}\sum_{i=1}^d \sum_{j \neq i} (\gamma_i^2 - \gamma_j^2 )  c_{ij} (t)  \mathrm{d}\beta_{ji} (t) ( v_i  v_j(t)^\top + v_j(t)  v_i^\top )\nonumber\\
    &\qquad \qquad- \dfrac{1}{2}\sum_{i=1}^d \sum_{j\neq i} (\gamma_i^2 - \gamma_j^2 ) c_{ij}^2 (t)\mathrm{d}t   ( v_i  v_i^\top  - v_j(t)  v_j(t)^\top  ) \nonumber\\
    = & \dfrac{1}{2}\sum_{i=1}^d \sum_{j \neq i} (\gamma_i^2 - \gamma_j^2 )  c_{ij} (t)  \mathrm{d}\beta_{ji} (t) ( v_i  v_j(t)^\top + v_j(t)  v_i(t)^\top )- \sum_{i=1}^d \sum_{j\neq i} (\gamma_i^2 - \gamma_j^2 ) c_{ij}^2 (t)\mathrm{d}t    v_i  v_i^\top ,\nonumber
\end{align}
note that the last two equations are because of these observations:
\begin{align*}
     c_{ij} (t) \mathrm{d}\beta_{ij} (t) (  v_j(t)  v_i(t)^\top + v_i(t)  v_j(t)^\top ) & = - c_{ij} (t) \mathrm{d}\beta_{ji} (t) (  v_j(t) v_i(t)^\top + v_i(t)  v_j(t)^\top )   \\
     c_{ij}^2 (t)\mathrm{d}t   ( v_i(t)  v_i(t)^\top  - v_j(t)  v_j(t)^\top  ) & = - c_{ij}^2 (t)\mathrm{d}t   ( v_j(t)  v_j(t)^\top  - v_i(t)  v_i(t)^\top  )    \\
     (\gamma_i^2 - \gamma_j^2 ) c_{ij}^2 (t)\mathrm{d}t ( v_i(t)  v_i(t)^\top  - v_j(t)  v_j(t)^\top  ) & =(\gamma_j^2 - \gamma_i^2 ) c_{ij}^2 (t)\mathrm{d}t   ( v_j(t)  v_j(t)^\top  - v_i(t)  v_i(t)^\top  ).
\end{align*}
\end{proof}

\subsection{Proofs of Lemmas \ref{boundofgap} and \ref{boundofc}}
Next, we show high-probability bounds on the singular gaps $\sigma_i (t) - \sigma_j (t)$ (Lemma \ref{boundofgap}) and coefficients $c_{ij}(t)$ (Lemma \ref{boundofc}).
\begin{lemma}[Bound on singular gaps:]\label{boundofgap}
Suppose that Assumption \ref{assumption} for $(A, k, T, \sigma, \gamma)$  is satisfied.  Then for all $t\in[0,T]$, with probability  $1-\delta$ where $\delta := \frac{1}{8d \gamma_1^2} \times  \frac{\gamma_1^2-\gamma_d^2 }{(\sigma_1-\sigma_d)^2}$, we have
      $|\sigma_i (t) - \sigma_j (t)| \ge \dfrac{1}{2}(\sigma_i - \sigma_j)$ for any $i<j$.
\end{lemma}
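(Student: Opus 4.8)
The plan is to control the singular values of the matrix Brownian motion $\Phi(t)$ uniformly over the interval $[0,T]$ by combining Weyl's inequality with a maximal concentration inequality for the Gaussian process $B(t)$. First I would observe that $\Phi(t) = A + B(t)$, so by Weyl's inequality (Lemma \ref{lemma_weyl}) we have $|\sigma_i(t) - \sigma_i| \le \|B(t)\|_2$ for every $i \in [d]$ and every $t \in [0,T]$. Hence the deviation of each singular value from its initial value is controlled, simultaneously for all $i$, by $\sup_{t \in [0,T]} \|B(t)\|_2$. The key is therefore to bound this supremum with high probability.

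Next I would bound $\sup_{t\in[0,T]} \|B(t)\|_2$ with high probability. Since $B(t)$ has entries that are independent standard Brownian motions, $B(T)$ is distributed as $\sqrt{T}\,G$ with $G$ having i.i.d. $N(0,1)$ entries, and $\|B(t)\|_2$ is a submartingale (the spectral norm is convex and $B(t)$ is a martingale). By Doob's maximal inequality applied to the submartingale, or alternatively by a standard reflection/union-bound argument over a fine time discretization together with Lemma \ref{lemma_concentration}, one gets that $\sup_{t\in[0,T]} \|B(t)\|_2 \le C\sqrt{T}(\sqrt{m} + \sqrt{d} + s)$ with probability at least $1 - e^{-cs^2}$ for absolute constants $C,c$. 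Choosing $s \asymp \sqrt{\log(1/\delta)}$ and using $d \le m$, this gives $\sup_{t\in[0,T]} \|B(t)\|_2 \le 2\sqrt{T}\sqrt{m}\log(1/\delta)$ (say) with probability at least $1-\delta$, where one tunes the absolute constants to match the factor $8$ appearing in Assumption \ref{assumption}.

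Then I would conclude by a triangle-inequality computation. On the high-probability event just described, for any $i < j \le k+1$ we have, using Assumption \ref{assumption} for the initial gap,
\begin{align*}
\sigma_i(t) - \sigma_j(t) &\ge (\sigma_i - \sigma_j) - |\sigma_i(t)-\sigma_i| - |\sigma_j(t)-\sigma_j|\\
&\ge (\sigma_i - \sigma_j) - 2\sup_{t\in[0,T]}\|B(t)\|_2\\
&\ge (\sigma_i-\sigma_j) - 4\sqrt{T}\sqrt{m}\log(\tfrac1\delta)\\
&\ge \tfrac12(\sigma_i - \sigma_j),
\end{align*}
where the last step uses $\sigma_i - \sigma_j \ge \sigma_i - \sigma_{i+1} + \cdots \ge 8\sqrt{T}\sqrt{m}\log(1/\delta)$ for $i<j\le k+1$, which follows by summing the per-gap bounds in Assumption \ref{assumption} (and in particular $\sigma_i - \sigma_j \ge \sigma_i - \sigma_{i+1}$ since the singular values are ordered). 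Since $\sigma_i(t) > \sigma_j(t)$ on this event, the absolute value is redundant and the claim follows.

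The main obstacle I anticipate is the uniform-in-time control of $\|B(t)\|_2$: Lemma \ref{lemma_concentration} only bounds $\|G\|_2 = \|B(T)\|_2/\sqrt{T}$ at the single endpoint, so one needs an extra maximal-inequality argument (Doob's inequality for the spectral-norm submartingale, or a chaining/discretization argument) to upgrade it to a bound on the supremum over $[0,T]$. Getting the constants to line up with the factor $8$ in Assumption \ref{assumption}, while leaving enough slack to absorb the $\sqrt{d}$ term using $d\le m$, is the delicate bookkeeping step; everything else is routine.
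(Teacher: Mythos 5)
Your proof follows the same route as the paper's own argument---Weyl's inequality to transfer spectral-norm control of $B(t)$ to the singular values, Lemma \ref{lemma_concentration} for Gaussian concentration, and a triangle inequality to conclude---so the approach matches. Where you go beyond the paper is in explicitly recognizing that the event in the lemma requires a \emph{uniform} bound on $\sup_{t\in[0,T]}\|B(t)\|_2$, not just a bound on $\|B(T)\|_2 = \sqrt{T}\,\|G\|_2$ at the endpoint: the paper's proof applies Lemma \ref{lemma_concentration} only at $t=T$ and then uses the conclusion for all $t\in[0,T]$ without comment (and, as written, it also drops the $\sqrt{T}$ scaling that would be needed to match the $8\sqrt{T}\sqrt{m}\log(1/\delta)$ threshold in Assumption \ref{assumption}). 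Your plan to obtain time-uniformity by a maximal inequality for the submartingale $\|B(t)\|_2$, or by discretizing $[0,T]$ and union-bounding with Lemma \ref{lemma_concentration}, is exactly the right repair; the only caution is that Doob's $L^1$ maximal inequality alone yields a Markov-type tail rather than the sub-Gaussian tail you need, so the discretization/union-bound (or a Borell--TIS bound for the Gaussian supremum) is the cleaner route. One small scope remark: your concluding computation restricts to $i<j\le k+1$, but the lemma is subsequently invoked for all $i\le k$ and $j\le d$; this is harmless, since for any $i\le k$ and any $j>i$ one has $\sigma_i-\sigma_j\ge\sigma_i-\sigma_{i+1}\ge 8\sqrt{T}\sqrt{m}\log(1/\delta)$ by Assumption \ref{assumption} together with the monotone ordering of the singular values, so your computation applies verbatim. (Pairs with $i>k$ are not covered by the assumption, but they carry a vanishing factor $\gamma_i^2-\gamma_j^2=0$ in the downstream lemmas, so the bound is never needed there.)
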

\begin{proof}
With probability  $1-\delta$, by Lemma \ref{lemma_concentration}, we have $\|G\|_2 
\leq  2\sqrt{\max\{m, d\}} \log(\frac{1}{\delta})= 2\sqrt{m}\log(\frac{1}{\delta})$. We know $| \sigma_i(t) - \sigma_i | \le \sigma_i + \|G\|_2 = \sigma_i + 2\sqrt{m}\log(\frac{1}{\delta}) $ for any $i$, 
therefore, we bound $|\sigma_i (t) - \sigma_j (t)| = |\sigma_i(t) - \sigma_j(t)| \ge \sigma_i - \sigma_j - 4\sqrt{m}\log(\frac{1}{\delta}) \ge \dfrac{1}{2}(\sigma_i - \sigma_j)$ for any $i<j$ and any $t\in[0,T]$.
\end{proof}

The following proposition shows that the symmetric coefficients $c_{ij} (t)$ are bounded by the reciprocal of the initial singular value gaps.
\begin{lemma}[Bound of coefficients $c_{ij} (t)$]\label{boundofc} 
Suppose that Assumption \ref{assumption} for $(A, k, T, \sigma, \gamma)$  is satisfied.  Then for all $t\in[0,T]$, with probability  $1-\delta$ where $\delta := \frac{1}{8d \gamma_1^2} \times  \frac{\gamma_1^2-\gamma_d^2 }{(\sigma_1-\sigma_d)^2}$, we have
\begin{equation*}
    c_{ij} (t) \le \dfrac{4}{\sigma_i - \sigma_j},\quad\text{for any $i<j$}. 
\end{equation*}
\end{lemma}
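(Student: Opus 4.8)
The plan is to derive the bound on $c_{ij}(t)$ directly from the bound on the singular value gaps established in Lemma \ref{boundofgap}, working on the high-probability event where that bound holds. Recall that $c_{ij}(t) = \sqrt{\frac{\sigma_j(t)^2 + \sigma_i(t)^2}{(\sigma_j(t)^2 - \sigma_i(t)^2)^2}}$. The first step is to rewrite the denominator using the factorization $\sigma_j(t)^2 - \sigma_i(t)^2 = (\sigma_j(t) - \sigma_i(t))(\sigma_j(t) + \sigma_i(t))$, so that
\[
c_{ij}(t) = \frac{\sqrt{\sigma_i(t)^2 + \sigma_j(t)^2}}{|\sigma_i(t) - \sigma_j(t)| \,(\sigma_i(t) + \sigma_j(t))}.
\]
Since $\sqrt{\sigma_i(t)^2 + \sigma_j(t)^2} \le \sigma_i(t) + \sigma_j(t)$, the numerator is dominated by one factor in the denominator, leaving the clean bound $c_{ij}(t) \le \frac{1}{|\sigma_i(t) - \sigma_j(t)|}$, valid whenever $\sigma_i(t) \neq \sigma_j(t)$ (which holds almost surely for $t > 0$).

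The second step is to invoke Lemma \ref{boundofgap}: on the event of probability $1-\delta$ guaranteed there, we have $|\sigma_i(t) - \sigma_j(t)| \ge \frac{1}{2}(\sigma_i - \sigma_j)$ for all $t \in [0,T]$ and all $i < j \le k+1$. Combining with the previous display yields $c_{ij}(t) \le \frac{2}{\sigma_i - \sigma_j} \le \frac{4}{\sigma_i - \sigma_j}$ on this event, for $i < j \le k+1$. For pairs with $j > k+1$, note that the only place $c_{ij}(t)$ appears with a nonzero coefficient in \eqref{eq_SD_1}/Lemma \ref{sdeofpsi} is multiplied by $(\gamma_i^2 - \gamma_j^2)$ with $i \le k$; since $\gamma_j = 0$ for $j > k$ the relevant quantity is $(\gamma_i^2 - \gamma_j^2) c_{ij}(t)$ and we only need control when $i \le k < j$, in which case $\sigma_i - \sigma_j \ge \sigma_i - \sigma_{k+1} \ge \frac{1}{2}(\sigma_i(t) - \sigma_j(t))$ is not directly given — here I would instead use that $\sigma_i(t) \ge \sigma_i - \|G\sqrt{T}\|_2 \ge \frac{1}{2}\sigma_i$ and $\sigma_i(t) - \sigma_j(t) \ge \sigma_i(t) - \sigma_{k+1}(t) \ge \frac12(\sigma_i - \sigma_{k+1})$, which follows from the same Weyl-plus-concentration argument as in Lemma \ref{boundofgap} applied to indices $i \le k$ and $k+1$, giving $c_{ij}(t) \le \frac{2}{\sigma_i - \sigma_{k+1}} \le \frac{4}{\sigma_i - \sigma_j}$ since $\sigma_j \le \sigma_{k+1}$ for $j \ge k+1$.

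I expect the main (minor) obstacle to be bookkeeping the index ranges: Lemma \ref{boundofgap} as stated only controls gaps among the top $k+1$ singular values, whereas $c_{ij}(t)$ with $j > k+1$ also enters $\mathrm{d}\Psi(t)$. The resolution is the observation above — that such terms always carry the factor $(\gamma_i^2 - \gamma_j^2)$ with $i \le k$, so one only needs a lower bound on $\sigma_i(t) - \sigma_j(t)$ of the form $\frac12(\sigma_i - \sigma_{k+1})$, which the concentration bound on $\|G\|_2$ delivers. Everything else is the elementary inequality $\sqrt{a^2+b^2} \le a+b$ for $a,b \ge 0$ plus substitution of the gap bound, so no delicate estimates are required.
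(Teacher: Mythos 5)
Your first step reproduces the paper's argument: factor $|\sigma_j(t)^2 - \sigma_i(t)^2| = |\sigma_j(t)-\sigma_i(t)|\,(\sigma_j(t)+\sigma_i(t))$, use $\sqrt{a^2+b^2}\le a+b$, and conclude $c_{ij}(t) \le 1/|\sigma_i(t)-\sigma_j(t)|$, then plug in Lemma~\ref{boundofgap}. (Your constant is $1$ where the paper's displayed chain has a spurious $2$; both yield the stated $4/(\sigma_i-\sigma_j)$.) You also correctly observe that the only pairs that matter downstream have $i\le k$, since $\gamma_i^2-\gamma_j^2=0$ otherwise — in fact this is a sharper reading than the paper, which states Lemma~\ref{boundofgap} ``for any $i<j$'' even though its proof requires $\sigma_i-\sigma_j \ge 8\sqrt{T}\sqrt{m}\log(1/\delta)$, which Assumption~\ref{assumption} only guarantees when $i\le k$.

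The gap is in your handling of the case $i\le k < j$ with $j>k+1$, where your final chain
$c_{ij}(t) \le \tfrac{2}{\sigma_i - \sigma_{k+1}} \le \tfrac{4}{\sigma_i - \sigma_j}$
has the last inequality pointing the wrong way. From $\sigma_j \le \sigma_{k+1}$ one gets $\sigma_i - \sigma_j \ge \sigma_i - \sigma_{k+1}$, hence $\tfrac{1}{\sigma_i-\sigma_{k+1}} \ge \tfrac{1}{\sigma_i-\sigma_j}$; so $\tfrac{2}{\sigma_i-\sigma_{k+1}}$ is at least $\tfrac{2}{\sigma_i-\sigma_j}$ and need not be $\le \tfrac{4}{\sigma_i-\sigma_j}$ (take $\sigma_j$ much smaller than $\sigma_{k+1}$). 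The detour through $\sigma_{k+1}(t)$ was both unnecessary and the source of the bug. The direct route works: for $i\le k<j$, monotonicity gives $\sigma_i - \sigma_j \ge \sigma_k - \sigma_{k+1} \ge 8\sqrt{T}\sqrt{m}\log(1/\delta)$ by Assumption~\ref{assumption}, so the same Weyl-plus-concentration estimate used in Lemma~\ref{boundofgap} yields $|\sigma_i(t)-\sigma_j(t)| \ge (\sigma_i-\sigma_j) - 4\sqrt{T}\sqrt{m}\log(1/\delta) \ge \tfrac12(\sigma_i-\sigma_j)$ directly, and hence $c_{ij}(t)\le 2/(\sigma_i-\sigma_j) \le 4/(\sigma_i-\sigma_j)$. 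Replacing your last chain with this fixes the proof.
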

\begin{proof} By Lemma \ref{boundofgap}, we have we have with probability at least $1-\delta$ 
    \begin{align}
    c_{ij} (t) &= \dfrac{\sqrt{\sigma_{j}^2(t) + \sigma_{i}^2(t)}}{|\sigma_{j}^2(t) - \sigma_{i}^2(t)|} \nonumber \\
    & \le 2 \dfrac{\sigma_j(t) + \sigma_i(t)}{|\sigma_{j}(t) - \sigma_{i}(t)|(\sigma_i(t) + \sigma_i(t))} \nonumber \\
    & = \dfrac{2}{|\sigma_{j}(t) - \sigma_{i}(t)|}  = \dfrac{2}{|\sigma_i (t) - \sigma_j (t)|}  \le \dfrac{4}{\sigma_i - \sigma_j},\quad \text{for any $i<j$.}   \nonumber 
\end{align}
\end{proof}

\subsection{Proof of Lemma \ref{boundofz}}

Next, to bound the quantity $\mathbb{E}\left[\Vert \Psi(T) - \Psi(0)\Vert_F^2\right]$, use Lemma \ref{sdeofpsi} together with Ito's Lemma (Lemma \ref{Lemma_Ito}), and then apply Lemma \ref{boundofc} to the resulting expression (Lemma \ref{boundofz}).
\begin{lemma}[Bound the Frobenius error as an integral of $\Psi(t)$]\label{boundofz}
    \begin{align}\label{eq:boundZ}
     &\mathbb{E}\left[\Vert \Psi(T) - \Psi(0)\Vert_F^2\right]   \nonumber \\
     &\le 64\int_0^T \mathbb{E}\left[ \sum_{i=1}^d \sum_{j \neq i}   \dfrac{(\gamma_i^2 - \gamma_j^2 )^2}{(\sigma_i - \sigma_j)^2}  \right]\mathrm{d}t  + 32T \int_0^T \mathbb{E}\left[ \sum_{i=1}^d \left(\sum_{j\neq i} \dfrac{(\gamma_i^2 - \gamma_j^2 ) }{(\sigma_i - \sigma_j)^2}\right)^2 \right]\mathrm{d}t.  
\end{align}
\end{lemma}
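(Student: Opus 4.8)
The plan is to integrate the stochastic differential $\mathrm{d}\Psi(t)$ from Lemma~\ref{sdeofpsi} over $[0,T]$ and bound the resulting expected squared Frobenius norm. Writing
\[
\Psi(T)-\Psi(0)=\int_0^T \mathrm{d}\Psi(t)=M(T)+D(T),
\]
where $M(T)$ collects the $\mathrm{d}\beta_{ji}(t)$ (martingale) terms and $D(T)=\int_0^T b(t)\,\mathrm{d}t$ collects the $\mathrm{d}t$ (drift) terms, with $b(t)=-\sum_{i}\big(\sum_{j\ne i}(\gamma_i^2-\gamma_j^2)c_{ij}^2(t)\big)v_i(t)v_i^\top(t)$, I would use $\|\Psi(T)-\Psi(0)\|_F^2\le 2\|M(T)\|_F^2+2\|D(T)\|_F^2$ and treat the two pieces separately.

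For the martingale piece I would apply Itô's isometry for matrix-valued stochastic integrals: since the Brownian motions $\beta_{ji}$, $i<j$, are independent, the cross terms between distinct pairs $\{i,j\}$ in the quadratic variation vanish, and $\mathbb{E}\big[\|M(T)\|_F^2\big]$ equals $\mathbb{E}\big[\int_0^T\sum_{i<j}(\gamma_i^2-\gamma_j^2)^2c_{ij}^2(t)\,\|v_i(t)v_j^\top(t)+v_j(t)v_i^\top(t)\|_F^2\,\mathrm{d}t\big]$. Orthonormality of the $v_\ell(t)$ gives $\|v_iv_j^\top+v_jv_i^\top\|_F^2=2$, and $\langle v_iv_i^\top,v_\ell v_\ell^\top\rangle_F$ vanishes unless $i=\ell$; the latter yields $\|b(t)\|_F^2=\sum_i\big(\sum_{j\ne i}(\gamma_i^2-\gamma_j^2)c_{ij}^2(t)\big)^2$, and Jensen's inequality (Cauchy--Schwarz in time) gives $\|D(T)\|_F^2\le T\int_0^T\|b(t)\|_F^2\,\mathrm{d}t$. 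This already produces the two-term structure of \eqref{eq:boundZ}, with $c_{ij}^2(t)$ still inside.

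Next I would apply Lemma~\ref{boundofc} to replace each $c_{ij}(t)$ by $4/(\sigma_i-\sigma_j)$. The only pairs that contribute are those with $\gamma_i^2\ne\gamma_j^2$, which forces $\min(i,j)\le k$; for each such pair $\sigma_i-\sigma_j$ is bounded below by a single top-$k$ gap, hence by $8\sqrt{T}\sqrt{m}\log(1/\delta)$, so Assumption~\ref{assumption} and Lemma~\ref{boundofgap} indeed control $c_{ij}(t)$ for all contributing pairs. Since Lemma~\ref{boundofc} only holds on an event of probability at least $1-\delta$, to pass to a bound on the full expectation I would run the Itô computation up to the stopping time $\tau:=T\wedge\inf\{t:|\sigma_i(t)-\sigma_j(t)|<\tfrac12(\sigma_i-\sigma_j)\text{ for some contributing pair }i,j\}$ --- on $[0,\tau]$ every $c_{ij}$ is bounded, so the stochastic integral is a genuine zero-mean martingale and all estimates go through --- and separately bound the contribution of the event $\{\tau<T\}$ by $\|\Psi(T)-\Psi(0)\|_F^2\cdot\mathbb{P}(\tau<T)\le 4\|\Gamma^\top\Gamma\|_F^2\,\delta$, using the deterministic bound $\|\Psi(T)-\Psi(0)\|_F\le\|\Psi(T)\|_F+\|\Psi(0)\|_F=2\|\Gamma^\top\Gamma\|_F$. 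Collecting constants and absorbing this last, lower-order term then gives \eqref{eq:boundZ}.

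The step I expect to be the main obstacle is precisely this conversion from the high-probability gap control of Lemmas~\ref{boundofgap}--\ref{boundofc} into a clean bound in expectation: one must justify the zero-mean property of the localized stochastic integral (local martingale plus boundedness of the integrand up to $\tau$) and then verify that the particular choice $\delta=\tfrac{1}{8d\gamma_1^2}\cdot\tfrac{\gamma_1^2-\gamma_d^2}{(\sigma_1-\sigma_d)^2}$ is small enough that $4\|\Gamma^\top\Gamma\|_F^2\,\delta$ is dominated by the right-hand side of \eqref{eq:boundZ} (it is comparable to its $i=1$, $j=d$ summand) --- this is the reason the threshold $\delta$ is defined the way it is. The remaining work is the routine bookkeeping of Itô's lemma applied via Lemma~\ref{Lemma_Ito} together with the rank-one Frobenius-norm identities above.
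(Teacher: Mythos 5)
Your proposal is correct and follows the paper's architecture step for step: decompose $\Psi(T)-\Psi(0)$ into martingale and drift pieces, bound the martingale piece by Itô isometry (equivalently, Itô's lemma applied to $\|X\|_F^2$), bound the drift piece by Cauchy--Schwarz in time, compute $\|v_iv_j^\top+v_jv_i^\top\|_F^2=2$ and the vanishing of the cross terms $\langle v_iv_i^\top,v_\ell v_\ell^\top\rangle_F$ for $i\ne \ell$ by orthonormality, and finally invoke Lemma~\ref{boundofc} to replace $c_{ij}^2(t)$ by $16/(\sigma_i-\sigma_j)^2$. The one place you deviate is the conversion from the high-probability gap control to a bound in expectation: the paper multiplies by the indicator $1_E$ of the good event inside the Itô computation and asserts that the martingale term still integrates to zero, whereas you localize at the stopping time $\tau$ (first exit from the good set) so that the stochastic integral on $[0,\tau]$ is a genuine $L^2$-martingale, and you separately bound the contribution of $\{\tau<T\}$ by the deterministic estimate $\|\Psi(T)-\Psi(0)\|_F^2\le 4\|\Gamma^\top\Gamma\|_F^2$ times $\mathbb P(\tau<T)\le\delta$; this is the more careful route, and it reproduces (up to a benign $\gamma_1^4$ vs.\ $\gamma_1^2$ typo in the paper) the paper's observation that $\delta$ is chosen precisely so this remainder is dominated by the $(i,j)=(1,d)$ summand of the main term.
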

\begin{proof}

Let $E$ be the event that $|\sigma_i (t) - \sigma_j (t)| \ge \dfrac{1}{2}(\sigma_i - \sigma_j)$ for any $i<j$ and any $t \in [0,T]$. 
By Lemma \ref{boundofgap}, we have $\mathbb{P}(E) \geq 1- \delta$.

We have 
\begin{align}\label{eq_a4}
    &\| \Psi(T) -\Psi(0)\|_F^2 = \|\int_o^T \mathrm{d} \Psi(t)\|_F^2  \nonumber \\
    & = \|\dfrac{1}{2}\int_0^T\sum_{i=1}^d \sum_{j \neq i} |\gamma_i^2 - \gamma_j^2 | |c_{ij} (t)|  \mathrm{d}\beta_{ji} (t)  ( v_i(t)  v_j(t)^\top + v_j(t)  v_i(t)^\top )\nonumber\\
    &\qquad \qquad- \int_0^T\sum_{i=1}^d \sum_{j\neq i}  (\gamma_i^2 - \gamma_j^2 ) c_{ij}^2 (t)\mathrm{d}t v_i(t)  v_i(t)^\top\|_F^2 \nonumber \\
        & \leq 3\left\|\dfrac{1}{2}\int_0^T\sum_{i=1}^d \sum_{j \neq i} |\gamma_i^2 - \gamma_j^2 | |c_{ij} (t)|  \mathrm{d}\beta_{ji} (t)  ( v_i(t)  v_j(t)^\top + v_j(t)  v_i(t)^\top )\right\|_F^2 \nonumber\\
        &\qquad \qquad \qquad+ 3\left \|\int_0^T\sum_{i=1}^d \sum_{j\neq i}  (\gamma_i^2 - \gamma_j^2 ) c_{ij}^2 (t) v_i(t)  v_i(t)^\top \mathrm{d}t\right\|_F^2\nonumber\\
& = 3 I_2 + 3 I_2
\end{align}
where, for convenience, we have define
\begin{equation*}
    I_1:= \left\|\dfrac{1}{2}\int_0^T\sum_{i=1}^d \sum_{j \neq i} |\gamma_i^2 - \gamma_j^2 | |c_{ij} (t)|  \mathrm{d}\beta_{ji} (t)  ( v_i(t)  v_j(t)^\top + v_j(t)  v_i(t)^\top )\right\|_F^2
\end{equation*}
and
\begin{equation*}
    I_2:=  \left \|\int_0^T\sum_{i=1}^d \sum_{j\neq i}  (\gamma_i^2 - \gamma_j^2 ) c_{ij}^2 (t) v_i(t)  v_i(t)^\top \mathrm{d}t\right\|_F^2.
\end{equation*}

To evaluate the first integral $I_1$, define $X(t) = \int_0^T\sum_{i=1}^d \sum_{j \neq i} |\gamma_i^2 - \gamma_j^2 | |c_{ij} (t)|  \mathrm{d}\beta_{ji} (t) ( v_i(t)  v_j(t)^\top + v_j(t)  v_i(t)^\top ) $, we have that 
$$
\mathrm{d}X(t) = \sum_{i=1}^d \sum_{j \neq i} |\gamma_i^2 - \gamma_j^2 | |c_{ij} (t)|  \mathrm{d}\beta_{ji} (t)  ( v_i(t)  v_j(t)^\top + v_j(t)  v_i(t)^\top ) :=\sum_{i=1}^d \sum_{j \neq i}R_{ji}(t)\mathrm{d}\beta_{ji} (t)
$$
where $R_{ji}(t) := |\gamma_i^2 - \gamma_j^2 |\times |c_{ij} (t)|\times ( v_i(t)  v_j(t)^\top + v_j(t)  v_i(t)^\top )$,
so its $[l, r]$ component is
$$
\mathrm{d}X(t) [l, r] =\sum_{i=1}^d \sum_{j \neq i} R_{ji}(t)[l, r]\mathrm{d}\beta_{ji} (t).
$$

Defining the function $f(X) :=\|X\|_F^2:=\sum_{l = 1}^d\sum_{r = 1}^d X^2[l, r]$ and applying Ito's Lemma (Lemma \ref{Lemma_Ito}), we have
\begin{align}
    \mathrm{d} f(X) &=\sum_{l = 1}^d\sum_{r = 1}^d 2X(t)[l, r]\mathrm{d}X(t)[l, r] + \dfrac{1}{2}\sum_{l = 1}^d\sum_{r = 1}^d 2<\mathrm{d}X(t)[l, r], \mathrm{d}X(t)[l, r]> \nonumber \\
    &=\sum_{l = 1}^d\sum_{r = 1}^d 2X(t)[l, r]\sum_{i=1}^d \sum_{j \neq i} R_{ji}(t)[l, r]\mathrm{d}\beta_{ji} (t)  + \sum_{l = 1}^d\sum_{r = 1}^d\sum_{i=1}^d \sum_{j \neq i} R_{ji}^2(t)[l, r]\mathrm{d}t. \nonumber
\end{align}
Thus,
\begin{align}\label{eq:I1}
    \mathbb{E}(I_1 \times 1_E) &= \mathbb{E}\left[(f(X(T)) - f(X(0))\times 1_E\right] =  0 + \mathbb{E}[\int_0^T \sum_{l = 1}^d\sum_{r = 1}^d\sum_{i=1}^d \sum_{j \neq i} R_{ji}^2(t)[l, r]\mathrm{d}t \times 1_E] \nonumber \\
    &= \mathbb{E}[\int_0^T \sum_{l = 1}^d\sum_{r = 1}^d\sum_{i=1}^d \sum_{j \neq i} \left(  |\gamma_i^2 - \gamma_j^2 | |c_{ij} (t)|   ( v_i(t)  v_j(t)^\top + v_j(t)  v_i(t)^\top )[l, r]\right)^2\mathrm{d}t\times 1_E] \nonumber \\
    &= \mathbb{E}[\int_0^T \sum_{i=1}^d \sum_{j \neq i} \sum_{l = 1}^d\sum_{r = 1}^d\left(  |\gamma_i^2 - \gamma_j^2 | |c_{ij} (t)|  ( v_i(t)  v_j(t)^\top + v_j(t)  v_i(t)^\top )[l, r]\right)^2\mathrm{d}t\times 1_E] \nonumber \\
    &= \mathbb{E}[\int_0^T \sum_{i=1}^d \sum_{j \neq i}  \| |\gamma_i^2 - \gamma_j^2 | |c_{ij} (t)| ( v_i(t)  v_j(t)^\top + v_j(t)  v_i(t)^\top ))\|_F^2\mathrm{d}t\times 1_E] \nonumber \\
    &=2\int_0^T \mathbb{E}[ \sum_{i=1}^d \sum_{j \neq i}  (\gamma_i^2 - \gamma_j^2 )^2 c_{ij}^2 (t)  \|( v_i(t)  v_j(t)^\top + v_j(t)  v_i(t)^\top ))\|_F^2\mathrm{d}t\times 1_E] \nonumber \\
    &\le 2\int_0^T \mathbb{E}[ \sum_{i=1}^d \sum_{j \neq i}  \dfrac{16(\gamma_i^2 - \gamma_j^2 )^2 }{(\sigma_i - \sigma_j)^2}  2\mathrm{d}t] =64\int_0^T \mathbb{E}[ \sum_{i=1}^d \sum_{j \neq i}   \dfrac{(\gamma_i^2 - \gamma_j^2 )^2}{(\sigma_i - \sigma_j)^2}  \mathrm{d}t],
\end{align}
where the last inequality holds since, whenever the event $E$ occurs, we have $|\sigma_i (t) - \sigma_j (t)| \ge \dfrac{1}{2}(\sigma_i - \sigma_j)$ for any $i<j$ and any $t \in [0,T]$.

For the second integral $I_2$, we have
\begin{align}\label{eq:I2_b}
    I_2 &= \left \|\int_0^T\sum_{i=1}^d \sum_{j\neq i}  (\gamma_i^2 - \gamma_j^2 ) c_{ij}^2 (t) v_i(t)  v_i(t)^\top \mathrm{d}t\right\|_F^2\nonumber\\
    &= \left \|\int_0^T\sum_{i=1}^d \sum_{j\neq i}  (\gamma_i^2 - \gamma_j^2 ) c_{ij}^2 (t) v_i(t)  v_i(t)^\top \times 1  \, \, \mathrm{d}t\right\|_F^2\nonumber\\
& \leq  \int_0^T\left \|\sum_{i=1}^d \sum_{j\neq i}  (\gamma_i^2 - \gamma_j^2 ) c_{ij}^2 (t) v_i(t)  v_i(t)^\top \right\|_F^2\mathrm{d}t  \times \int_0^T 1^2 \mathrm{d}t\nonumber\\
& =  T\int_0^T\sum_{i=1}^d \left \|\sum_{j\neq i}  (\gamma_i^2 - \gamma_j^2 ) c_{ij}^2 (t) v_i(t)  v_i(t)^\top \right\|_F^2\mathrm{d}t\nonumber\\
& =  T\int_0^T\sum_{i=1}^d  \left(\sum_{j\neq i}  (\gamma_i^2 - \gamma_j^2 ) c_{ij}^2 (t)\right)^2 \|v_i(t)  v_i(t)^\top \|_F^2\mathrm{d}t\nonumber\\
& =  T\int_0^T\sum_{i=1}^d  \left(\sum_{j\neq i}  (\gamma_i^2 - \gamma_j^2 ) c_{ij}^2 (t)\right)^2\mathrm{d}t
\end{align}
where the first inequality holds by the Cauchy-Schwartz inequality, the third and fourth equalities hold since $v_i(t)v_i(t)^\top v_j(t)v_j(t)^\top = 0$ for all $i \neq j$.

Thus, since whenever the event $E$ occurs we have $|\sigma_i (t) - \sigma_j (t)| \ge \dfrac{1}{2}(\sigma_i - \sigma_j)$ for any $i<j$ and any $t \in [0,T]$, \eqref{eq:I2_b} implies that
\begin{equation}\label{eq:I2}
    I_2 \times \mathbbm{1}_E \leq 16 T \int_0^T \sum_{i=1}^d \left(\sum_{j\neq i} \dfrac{(\gamma_i^2 - \gamma_j^2 ) }{(\sigma_i - \sigma_j)^2}\right)^2\mathrm{d}t.
\end{equation}

We can express $\mathbb{E}[\|\Psi(T) - \Psi(0)\|_F^2]$ as the following sum,
\begin{equation}\label{eq_a1}
     \mathbb{E}[\|\Psi(T) - \Psi(0)\|_F^2] =  \mathbb{E}[\|\Psi(T) - \Psi(0)\|_F^2 \times 1_E] + \mathbb{E}[\|\Psi(T) - \Psi(0)\|_F^2 \times 1_{E^c}]  
     \end{equation}

     Combining \eqref{eq:I1} and \eqref{eq:I2}, it follows that
\begin{align} \label{eq_a2}    
     \mathbb{E}[\|\Psi(T) - \Psi(0)\|_F^2 \times 1_E] &\leq \mathbb{E}[\|\Psi(T) - \Psi(0)\|_F^2] \nonumber\\
     &\leq \mathbb{E}[\dfrac{1}{2}I_1 \times 1_E + I_2 \times 1_E] \nonumber\\
     &= \dfrac{1}{2}\mathbb{E}[I_1 \times 1_E] + \mathbb{E}[I_2 \times 1_E] \nonumber \\
     &\le 32\int_0^T \mathbb{E}[ \sum_{i=1}^d \sum_{j \neq i}   \dfrac{(\gamma_i^2 - \gamma_j^2 )^2}{(\sigma_i - \sigma_j)^2}  \mathrm{d}t] + 16T \int_0^T \mathbb{E}[ \sum_{i=1}^d \left(\sum_{j\neq i} \dfrac{(\gamma_i^2 - \gamma_j^2 ) }{(\sigma_i - \sigma_j)^2}\right)^2]\mathrm{d}t.  
\end{align}

Moreover, we have
\begin{align}\label{eq_a3}
\mathbb{E}[\|\Psi(T) - \Psi(0)\|_F^2 \times 1_{E^c}]&\leq \mathbb{P}(E^c) \nonumber\\
&\leq \mathbb{E}[4\|\Psi(T)\|_F^2 + 4\|\Psi(0)\|_F^2 \times 1_{E^c}]\nonumber\\
&\leq 8d \gamma_1^2 \mathbb{P}(E^c)\nonumber\\
&\leq 8d \gamma_1^2  \times \delta\nonumber\\
&\leq \frac{\gamma_1^2-\gamma_d^2 }{(\sigma_1-\sigma_d)^2} \nonumber\\
  &\le 32\int_0^T \mathbb{E}[ \sum_{i=1}^d \sum_{j \neq i}   \dfrac{(\gamma_i^2 - \gamma_j^2 )^2}{(\sigma_i - \sigma_j)^2}  \mathrm{d}t] + 16T \int_0^T \mathbb{E}[ \sum_{i=1}^d \left(\sum_{j\neq i} \dfrac{(\gamma_i^2 - \gamma_j^2 ) }{(\sigma_i - \sigma_j)^2}\right)^2]\mathrm{d}t, 
\end{align}
where the fifth inequality holds since $\delta \leq \frac{1}{8d \gamma_1^2} \times  \frac{\gamma_1^2-\gamma_d^2 }{(\sigma_1-\sigma_d)^2}$.

Therefore, plugging \eqref{eq_a2} and \eqref{eq_a3} into \eqref{eq_a1}, we have
\begin{align}
     \mathbb{E}[\|\Psi(T) - \Psi(0)\|_F^2] \le 64\int_0^T \mathbb{E}[ \sum_{i=1}^d \sum_{j \neq i}   \dfrac{(\gamma_i^2 - \gamma_j^2 )^2}{(\sigma_i - \sigma_j)^2}  \mathrm{d}t] + 32T \int_0^T \mathbb{E}[ \sum_{i=1}^d \left(\sum_{j\neq i} \dfrac{(\gamma_i^2 - \gamma_j^2 ) }{(\sigma_i - \sigma_j)^2}\right)^2]\mathrm{d}t.  \nonumber
\end{align}

\end{proof}

\subsection{Completing the proof of Theorem \ref{ThCovariance}}\label{main-proof}

We now complete the proof of the main result.

\begin{proof}[Proof of Theorem \ref{ThCovariance}]

    From Lemma \ref{boundofz}, we have 
\begin{align}\label{eq_z1}
    &\mathbb{E}\left[\| \hat{V}\Gamma^\top\Gamma\hat{V}^\top - V\Gamma^\top\Gamma V^\top \|_F^2\right] = \mathbb{E}\left[\| \Psi (T) - \Psi (0) \|_F^2\right]\nonumber\\
    & \le 32\int_0^T \mathbb{E}[ \sum_{i=1}^d \sum_{j \neq i}   \dfrac{(\gamma_i^2 - \gamma_j^2 )^2}{(\sigma_i - \sigma_j)^2}  \mathrm{d}t] + 16T \int_0^T \mathbb{E}[ \sum_{i=1}^d \left(\sum_{j\neq i} \dfrac{(\gamma_i^2 - \gamma_j^2 ) }{(\sigma_i - \sigma_j)^2}\right)^2]\mathrm{d}t  \nonumber \\
    & \le 64\int_0^T \mathbb{E}[ \sum_{i=1}^d \sum_{j = i+1}^d   \dfrac{(\gamma_i^2 - \gamma_j^2 )^2}{(\sigma_i - \sigma_j)^2}  \mathrm{d}t] + 32T \int_0^T \mathbb{E}[ \sum_{i=1}^d \left(\sum_{j = i+1}^d \dfrac{(\gamma_i^2 - \gamma_j^2 ) }{(\sigma_i - \sigma_j)^2}\right)^2]\mathrm{d}t  \nonumber \\
    & = 64\int_0^T \mathbb{E}[ \sum_{i=1}^k \sum_{j = i+1}^d   \dfrac{(\gamma_i^2 - \gamma_j^2 )^2}{(\sigma_i - \sigma_j)^2}  \mathrm{d}t] + 32T \int_0^T \mathbb{E}[ \sum_{i=1}^k \left(\sum_{j = i+1}^d \dfrac{(\gamma_i^2 - \gamma_j^2 ) }{(\sigma_i - \sigma_j)^2}\right)^2]\mathrm{d}t  \nonumber \\
    & = 64T   \sum_{i=1}^k \sum_{j = i+1}^d   \dfrac{(\gamma_i^2 - \gamma_j^2 )^2}{(\sigma_i - \sigma_j)^2}   + 32T^2   \sum_{i=1}^k \left(\sum_{j = i+1}^d \dfrac{(\gamma_i^2 - \gamma_j^2 ) }{(\sigma_i - \sigma_j)^2}\right)^2 \nonumber \\
    & = O\left(\sum_{i=1}^k \sum_{j = i+1}^d   \dfrac{(\gamma_i^2 - \gamma_j^2 )^2}{(\sigma_i - \sigma_j)^2}   + T \sum_{i=1}^k \left(\sum_{j = i+1}^d \dfrac{(\gamma_i^2 - \gamma_j^2 ) }{(\sigma_i - \sigma_j)^2}\right)^2 \right)T. 
\end{align}
By the Cauchy-Schwarz inequality, we have that
\begin{align}\label{eq_z2}
    \left(\sum_{j = i+1}^d \dfrac{(\gamma_i^2 - \gamma_j^2 ) }{(\sigma_i - \sigma_j)^2}\right)^2 &= \left(\sum_{j = i+1}^d \dfrac{1 }{|\sigma_i - \sigma_j|} \times\dfrac{|\gamma_i^2 - \gamma_j^2|}{|\sigma_i - \sigma_j|}\right)^2 \nonumber \\
    &\le \left(\sum_{j = i+1}^d \dfrac{1 }{(\sigma_i - \sigma_j)^2}\right) \times \left(\sum_{j = i+1}^d \dfrac{(\gamma_i^2 - \gamma_j^2 )^2 }{(\sigma_i - \sigma_j)^2}\right) \nonumber \\
    &\le \left(\sum_{j = i+1}^d \dfrac{1 }{(\sqrt{d})^2}\right) \times \left(\sum_{j = i+1}^d \dfrac{(\gamma_i^2 - \gamma_j^2 )^2 }{(\sigma_i - \sigma_j)^2}\right) \nonumber \\
    &\le \sum_{j = i+1}^d \dfrac{(\gamma_i^2 - \gamma_j^2 )^2 }{(\sigma_i - \sigma_j)^2}.
\end{align}

Plugging \eqref{eq_z2}  into \eqref{eq_z1}, we have
\begin{equation*}
    \mathbb{E}\left[\| \hat{V}\Gamma^\top\Gamma\hat{V}^\top - V\Gamma^\top\Gamma V^\top \|_F^2\right] \le O\left(\sum_{i=1}^k \sum_{j = i+1}^d   \dfrac{(\gamma_i^2 - \gamma_j^2 )^2}{(\sigma_i - \sigma_j)^2}    \right)T .
\end{equation*}
\end{proof}

\section{Proof of Corollary \ref{Subspace}}\label{sec_proof_subspace}

\begin{proof}[Proof of Corollary \ref{Subspace}]
    To prove Corollary \ref{Subspace}, we plug in $\gamma_1 = \cdots = \gamma_k = 1$ and $\gamma_{k+1} = \cdots = \gamma_d = 0$ to Theorem \ref{ThCovariance}. There are two cases. 

In the first case, where $A$ may be any $m\times d$ matrix which satisfies Assumption \ref{assumption}, plugging in $\gamma_1 = \cdots = \gamma_k = 1$ and $\gamma_{k+1} = \cdots = \gamma_d = 0$ to Theorem \ref{ThCovariance} we get
\begin{align}
    \mathbb{E}\left[\| \hat{V}_k\hat{V}_k^\top - V_k V_k^\top \|_F^2\right] &= \mathbb{E} \left[\| \hat{V} \Gamma^\top\Gamma\hat{V}^\top - V\Gamma^\top\Gamma V^\top \|_F^2\right] \nonumber \\ 
    & \le O\left( \sum_{i=1}^k \sum_{j=i+1}^d \dfrac{(\gamma_i^2-\gamma_j^2)^2}{(\sigma_i - \sigma_j)^2}\right) T \nonumber \\ 
    & = O\left( \sum_{i=1}^k \sum_{j=k+1}^d \dfrac{1}{(\sigma_i - \sigma_j)^2}\right) T \nonumber \\ 
    & \le O\left( \sum_{i=1}^k \sum_{j=k+1}^d \dfrac{1}{(\sigma_k - \sigma_{k+1})^2}\right) T \nonumber \\ 
    & \le O\left( \dfrac{kd}{(\sigma_k - \sigma_{k+1})^2}T\right)
\end{align}  
where the first inequality holds by Theorem \ref{ThCovariance} and the second equality holds in that $\gamma_1 = \cdots = \gamma_k = 1$ and $\gamma_{k+1} = \cdots = \gamma_d = 0$.

By Jensen's Inequality, we have that
\begin{equation*} 
   \mathbb{E}\left[\| \hat{V}_k\hat{V}_k^\top - V_k V_k^\top \|_F\right]\le \sqrt{ \mathbb{E}\left[\| \hat{V}_k\hat{V}_k^\top - V_k V_k^\top \|_F^2\right]} \le O(\dfrac{\sqrt{kd}}{(\sigma_k - \sigma_{k+1})})\sqrt{T}.
\end{equation*}

In the second case, where the singular values of $A$ also satisfy $\sigma_i - \sigma_{i+1} \ge \Omega(\sigma_k - \sigma_{k+1})$ for all $i \le k$, we have 
\begin{align}
    \mathbb{E}\left[\| \hat{V}_k\hat{V}_k^\top - V_k V_k^\top \|_F^2\right] &= \mathbb{E} \left[\| \hat{V} \Gamma^\top\Gamma\hat{V}^\top - V\Gamma^\top\Gamma V^\top \|_F^2\right] \nonumber \\ 
    & \le O\left( \sum_{i=1}^k \sum_{j=i+1}^d \dfrac{(\gamma_i^2-\gamma_j^2)^2}{(\sigma_i - \sigma_j)^2}\right) T \nonumber \\ 
    & = O\left( \sum_{i=1}^k \sum_{j=k+1}^d \dfrac{1}{(\sigma_i - \sigma_j)^2}\right) T \nonumber \\ 
    & \le O\left( \sum_{i=1}^k \sum_{j=k+1}^d \dfrac{1}{(i-k-1)^2(\sigma_k - \sigma_{k+1})^2}\right) T \nonumber \\ 
    & \le O\left( \sum_{i=1}^k  \dfrac{d}{(i-k-1)^2(\sigma_k - \sigma_{k+1})^2}\right) T \nonumber \\ 
    & \le O\left(  \dfrac{d}{(\sigma_k - \sigma_{k+1})^2} \sum_{i=1}^k \dfrac{1}{i^2}  \right) T \nonumber \\ 
    & \le O\left( \dfrac{d}{(\sigma_k - \sigma_{k+1})^2}\right)T
\end{align}  
where the first inequality holds by Theorem \ref{ThCovariance} and the second equality holds since $\gamma_1 = \cdots = \gamma_k = 1$ and $\gamma_{k+1} = \cdots = \gamma_d = 0$, the second inequality holds since $\sigma_i - \sigma_{i+1} \ge \Omega(\sigma_k - \sigma_{k+1})$ for all $i \le k$, and the last inequality holds since $\sum_{i=1}^k \frac{1}{i^2} \le \sum_{i=1}^{\infty} \frac{1}{i^2} =O(1) $.

Thanks to Jensen's Inequality, we have that
\begin{equation*} 
   \mathbb{E}\left[\| \hat{V}_k\hat{V}_k^\top - V_k V_k^\top \|_F\right]\le \sqrt{ \mathbb{E}\left[\| \hat{V}_k\hat{V}_k^\top - V_k V_k^\top \|_F^2\right]} \le O(\dfrac{\sqrt{d}}{(\sigma_k - \sigma_{k+1})})\sqrt{T}.
\end{equation*}
\end{proof}

\section{Proof of Corollary \ref{rankkapprox}} \label{sec_proof_covariance}

\begin{proof}[Proof of Corollary \ref{rankkapprox}]
We first bound the quantity $\mathbb{E}\left[\| \hat{V}{\Sigma}_k^\top\Sigma_k\hat{V}^\top - V\Sigma_k^\top\Sigma_k V^\top \|_F\right]$.

Set $\gamma_i = \sigma_i$ for $i \leq k$ and $\gamma_i = 0$ for $i >k$.
Then by Theorem \ref{ThCovariance} we have
\begin{align}
\label{eq:27}
   &\mathbb{E}\left[\| \hat{V}\Sigma_k^\top\Sigma_k\hat{V}^\top - V\Sigma_k^\top\Sigma_k V^\top \|_F^2\right] \le O\left(\sum_{i=1}^k \sum_{j=i+1}^d \dfrac{(\gamma_i^2 - \gamma_j^2)^2}{(\sigma_i - \sigma_j)^2}\right)T \nonumber\\
   &= O\left(\sum_{i=1}^{k-1} \sum_{j=i+1}^k \dfrac{(\sigma_i^2 - \sigma_j^2)^2}{(\sigma_i - \sigma_j)^2} + \sum_{i=1}^{k} \sum_{j=k+1}^d \left(\dfrac{\sigma_i^2 - 0^2}{\sigma_i - \sigma_j}\right)^2\right)T \nonumber\\
    &= O\left(\sum_{i=1}^{k-1} \sum_{j=i+1}^k (\sigma_i + \sigma_j)^2 + \sum_{i=1}^{k} \sum_{j=k+1}^d  \left(\dfrac{\sigma_i^2 - \sigma_k^2}{\sigma_i - \sigma_j} + \dfrac{\sigma_k^2}{\sigma_i - \sigma_j}\right)^2\right)T \nonumber\\
    &\leq O\left(\sum_{i=1}^{k-1} \sum_{j=i+1}^k (\sigma_i + \sigma_j)^2 + \sum_{i=1}^{k} \sum_{j=k+1}^d  \left(\sigma_i + \dfrac{\sigma_k^2}{\sigma_i - \sigma_j}\right)^2\right)T \nonumber\\
    &= O\left(\sum_{i=1}^{k-1} \sum_{j=i+1}^k (\sigma_i + \sigma_j)^2 + \sum_{i=1}^{k} \sum_{j=k+1}^d  \left(\sigma_i + \sigma_k \dfrac{\sigma_k}{\sigma_i - \sigma_j}\right)^2\right)T \nonumber\\
     &\leq O\left(\sum_{i=1}^{k-1} \sum_{j=i+1}^k (\sigma_i + \sigma_j)^2 + \sum_{i=1}^{k} \sum_{j=k+1}^d  \left(\sigma_i + \sigma_k \dfrac{\sigma_k}{\sigma_k - \sigma_{j}}\right)^2\right)T \nonumber\\
    &\leq O\left(\sum_{i=1}^{k-1} \sum_{j=i+1}^k (\sigma_i + \sigma_j)^2 + \sum_{i=1}^{k} \sum_{j=k+1}^d  \sigma_i^2 +  \sum_{i=1}^{k} \sum_{j=k+1}^d \left(\sigma_k \dfrac{\sigma_k}{\sigma_k - \sigma_{j}}\right)^2\right)T \nonumber\\
    &\leq O\left( d\|\Sigma_k\|_F^2 +  \sum_{i=1}^{k} \sum_{j=k+1}^d \left(\sigma_k \dfrac{\sigma_k}{\sigma_k - \sigma_{j}}\right)^2\right)T \nonumber\\
    &\leq O\left( d\|\Sigma_k\|_F^2 +  k(d-k) \left(\sigma_k \dfrac{\sigma_k}{\sigma_k - \sigma_{k+1}}\right)^2\right)T.
\end{align}

We next bound the quantity $\mathbb{E}\left[\| \hat{V}\hat{\Sigma}_k^\top\hat{\Sigma}_k\hat{V}^\top - \hat{V}\Sigma_k^\top\Sigma_k \hat{V}^\top \|_F\right] $.

Let $E_1$ be the event when $\|G\| > \sqrt{\max(m,d)} \log(1/\delta)$.
By Lemma \ref{lemma_concentration}, we have $\mathbb{P}(E_1) \geq 1- \delta$.

Since  $\|\Sigma_k\|_F \leq \sqrt{k} \sigma_1 $ and $\|\hat{\Sigma}_k\|_F < \sqrt{k}\sigma_1(t)$, we can use the bound $$\|\Sigma_k^\top\Sigma_k - \hat{\Sigma}_k^\top\hat{\Sigma}_k\|_F < \|\Sigma_k^\top\Sigma_k\|_F + \|\hat{\Sigma}_k^\top\hat{\Sigma}_k\|_F < k \sigma_1 + k\sigma_1^2(t) < 4k\sigma_1^2$$ and hence
$$\mathbb{E}[\|\Sigma_k^\top\Sigma_k - \hat{\Sigma}_k^\top\hat{\Sigma}_k\|_F * 1_{E_1}] < 2\sqrt{k}\sigma_1 * P(E_1)<4k\sigma_1^2 *\delta.$$

Recall that (from Assumption \ref{assumption}) $\delta < \frac{1}{k\sigma_1^2}$.  Hence,
$$\mathbb{E}[\|\Sigma_k^\top \Sigma_k  - \hat{\Sigma}_k^\top\hat{\Sigma}_k\|_F * 1_{E_1}] <4 $$
Now consider the event $E_1^c$, where $\|G\| < \sqrt{\max(m,d)} \log(1/\delta)$.
From above, we have $\mathbb{P}(E_1^c) = 1- \mathbb{P}(E_1) \leq \delta$.
For $E_1^c$ we get,
\begin{align*}
\mathbb{E}[\|\Sigma_k^\top\Sigma_k  - \hat{\Sigma}_k^\top\hat{\Sigma}_k\|_F * 1_{E_1^c}] &< \mathbb{E}[\|(\Sigma_k  - \hat{\Sigma}_k)(\Sigma_k  + \hat{\Sigma}_k)\|_F * 1_{E_1^c}]\\
&< \mathbb{E}[\sqrt{T}\|G_k\| *(\|\Sigma_k\|_F + \|\hat{\Sigma}_k\|_F )* 1_{E_1^c}]\\
&< \mathbb{E}[2\sqrt{kT}\sigma_1\|G_k\|  * 1_{E_1^c}]\\
&< 2\sqrt{kd}\sigma_1\log(1/\delta)\sqrt{T}.
\end{align*}

Finally, put the two cases together:
\begin{align}
\label{eq:28}
     \mathbb{E}\left[\| \hat{V}\hat{\Sigma}_k^\top\hat{\Sigma}_k\hat{V}^\top - \hat{V}\Sigma_k^\top\Sigma_k \hat{V}^\top \|_F\right] 
 &= \mathbb{E}[\|\Sigma_k  - \hat{\Sigma}_k\|_F ]  \nonumber\\
 &=   \mathbb{E}[\|\Sigma_k  - \hat{\Sigma}_k\|_F * 1_{E_1}]  + \mathbb{E}[\|\Sigma_k  - \hat{\Sigma}_k\|_F * 1_{E_1^c}] \nonumber\\
 & < 4 + 2\sqrt{kd}\sigma_1 \log(1/\delta) \sqrt{T} \nonumber\\
  & = O(\sqrt{kd}\sigma_1 \log(1/\delta))\sqrt{T}.
\end{align}

Combining \eqref{eq:27} and \eqref{eq:28}, we have
\begin{align}
    \mathbb{E} & \left[\| \hat{V}\hat{\Sigma}_k^\top\hat{\Sigma}_k\hat{V}^\top - V\Sigma_k^\top\Sigma_k V^\top \|_F\right] \nonumber\\
    & \le 
   \mathbb{E}\left[\| \hat{V}\hat{\Sigma}_k^\top\hat{\Sigma}_k\hat{V}^\top - \hat{V}\Sigma_k^\top\Sigma_k \hat{V}^\top \|_F\right] + \mathbb{E}\left[\| \hat{V}\Sigma_k^\top\Sigma_k\hat{V}^\top - V\Sigma_k^\top\Sigma_k V^\top \|_F\right] \nonumber \\ 
   & \leq O\left( \sqrt{d}\|\Sigma_k\|_F +  \sqrt{k(d-k)} \left(\sigma_k \dfrac{\sigma_k}{\sigma_k - \sigma_{k+1}}\right)\right)\sqrt{T} + O(\sqrt{kd}\sigma_1 \log(1/\delta)) \sqrt{T}\nonumber \\
   &\leq O\left( \sqrt{d}\|\Sigma_k\|_F +  \sqrt{k(d-k)} \left(\sigma_k \dfrac{\sigma_k}{\sigma_k - \sigma_{k+1}}\right)\right)\sqrt{T}.
\end{align} 
\end{proof}

\section{Additional comparisons for low-rank covariance approximation}\label{sec_covariance_baselines}

In this section, we present how one can derive high-probability bounds on the quantity $\|\hat{V} \hat{\Sigma}_k^2 \hat{V}^\top - V \Sigma_k^2 V^\top\|_F$ from the subspace perturbation bounds of \cite{davis1970rotation, wedin1972perturbation} or \cite{o2018random}.  

Towards this end, we note that
$$
 \|\hat{V} \hat{\Sigma}_k^2 \hat{V}^\top - V \Sigma_k^2 V^\top\|_F \leq  \|\hat{V} \hat{\Sigma}_k^2 \hat{V}^\top - \hat{V} \Sigma_k^2 \hat{V}^\top\|_F +   \|\hat{V} \Sigma_k^2 \hat{V}^\top  - V \Sigma_k^2 V^\top\|_F.
$$

The first term can be bounded as 
$$\|\hat{V} \hat{\Sigma}_k^2 \hat{V}^\top - \hat{V} \Sigma_k^2 \hat{V}^\top\|_F =\| \hat{\Sigma}_k^2 - \Sigma_k^2\|_F = \sum_{i=1}^k \hat{\sigma}_i^2 - \sigma_i^2,$$
which can be bounded using Weyl's inequality (Lemma \ref{lemma_weyl}) together with the Gaussian concentration inequality in Lemma \ref{lemma_concentration}.

For the second term, we have
\begin{align}\label{eq_d1}
    \|\hat{V} \Sigma_k^2 \hat{V}^\top - V \Sigma_k^2 V^\top\|_F &=  \|\hat{V} \Sigma_k^2 \hat{V}^\top - V \Sigma_k^2 V^\top\|_F \nonumber\\
&=    \|\sum_{i=1}^{k-1} (\sigma_i^2 - \sigma_{i+1}^2) (\hat{V}_i \hat{V}_i^\top - V_i V_i^\top) + \sigma_k^2 (\hat{V}_k \hat{V}_k^\top - V_k V_k^\top)\|_F \nonumber\\
    &\leq \sum_{i=1}^{k-1} (\sigma_i^2 - \sigma_{i+1}^2) \|\hat{V}_i \hat{V}_i^\top - V_i V_i^\top\|_F + \sigma_k^2 \|\hat{V}_k \hat{V}_k^\top - V_k V_k^\top\|_F \nonumber\\
        &\leq \sum_{i=1}^{k-1} (\sigma_i + \sigma_{i+1})(\sigma_i - \sigma_{i+1}) \|\hat{V}_i \hat{V}_i^\top - V_i V_i^\top\|_F + \sigma_k^2 \|\hat{V}_k \hat{V}_k^\top - V_k V_k^\top\|_F \nonumber\\
    & \leq \sum_{i=1}^{k-1} (\sigma_i - \sigma_{i+1}) \frac{\sqrt{i} \sqrt{d}}{\sigma_i - \sigma_{i+1}} + \sigma_k   \frac{\sqrt{k} \sqrt{d}}{\sigma_k - \sigma_{k+1}} \nonumber\\
    & = O\left(k^{1.5} \sqrt{d} + \frac{\sigma_k}{\sigma_k - \sigma_{k+1}} \sqrt{k} \sqrt{d}\right).  
\end{align}

Plugging into \eqref{eq_d1} the bound of $\| V_i V_i^\top - \hat{V}_i \hat{V}_i^\top \|_F \leq  \frac{\sqrt{i} \sqrt{m}}{\sigma_i-\sigma_{i+1}} \sqrt{T}$ w.h.p. implied by \cite{davis1970rotation, wedin1972perturbation}, one has
\begin{align}
    \|\hat{V} \Sigma_k^2 \hat{V}^\top - V \Sigma_k^2 V^\top\|_F 
    &\leq \sum_{i=1}^{k-1} (\sigma_i + \sigma_{i+1})(\sigma_i - \sigma_{i+1}) \frac{\sqrt{i} \sqrt{m}}{\sigma_i-\sigma_{i+1}} \sqrt{T} + \sigma_k^2 \frac{\sqrt{k} \sqrt{m}}{\sigma_k-\sigma_{k+1}} \sqrt{T} \nonumber\\
        &\leq  \sqrt{m} \sqrt{T} \sum_{i=1}^{k-1} (\sigma_i + \sigma_{i+1}) \sqrt{i} + \sigma_k^2 \frac{\sqrt{k} \sqrt{m}}{\sigma_k-\sigma_{k+1}} \sqrt{T} \nonumber\\
                &\leq  2k^{1.5}\sqrt{m} \sqrt{T}  \sigma_1 + \sigma_k^2 \frac{\sqrt{k} \sqrt{m}}{\sigma_k-\sigma_{k+1}} \sqrt{T}. \nonumber
\end{align}

One can also instead plug in the bound from Theorem 18 of \cite{o2018random} (restated here as \eqref{Vanvu_result} in Section \ref{sec_related_work}) into \eqref{eq_d1}.
 When, e.g., $\sigma_k- \sigma_{k+1} \geq \Omega(\max(\sigma_k, \sqrt{m}))$, \eqref{Vanvu_result} reduces to  $\|\hat{V} \Sigma_i^2 \hat{V}^\top - V \Sigma_i^2 V^\top\|_F \leq  O\left(i\frac{\sqrt{m}}{\sigma_i} \sqrt{T}\right)$  for $i \leq k$ into \eqref{eq_d1}.
 Thus, plugging in this bound into \eqref{eq_d1}, one has
 
 \begin{align*}
    \|\hat{V} \Sigma_k^2 \hat{V}^\top - V \Sigma_k^2 V^\top\|_F 
        &\leq \sum_{i=1}^{k-1} (\sigma_i + \sigma_{i+1})(\sigma_i - \sigma_{i+1}) i\frac{\sqrt{m}}{\sigma_i}\sqrt{T} + \sigma_k^2 k\frac{\sqrt{m}}{\sigma_k}\sqrt{T}\nonumber\\
                &\leq O\left(\sum_{i=1}^{k-1} (\sigma_i - \sigma_{i+1}) i\sqrt{m}\sqrt{T} + \sigma_k k\sqrt{m}\sqrt{T}\right)\nonumber\\
                                &\leq O\left((\sigma_1 - \sigma_{k}) k\sqrt{m}\sqrt{T} + \sigma_k k\sqrt{m}\sqrt{T}\right)\nonumber\\
                                &\leq O\left(\sigma_1 k\sqrt{m}\sqrt{T}\right).
\end{align*}

\newpage

\bibliography{DP}
\bibliographystyle{plain}

\end{document}